\title{Schubert cells of mixed type in complex Lagrangian Grassmannians}
\author{Hyunmoon Kim}
\newif\ifdebug                                                      %
\def\C{\mathbb{C}}
\def\R{\mathbb{R}}
\def\Z{\mathbb{Z}}
\def\Lag{\operatorname{Lag}}
\def\Sp{\operatorname{Sp}}
\def\Id{\mathrm{Id}}
\def\vv{\mathbf{v}}
\def\ww{\mathbf{w}}
\def\uu{\mathbf{u}}
\def\ee{\mathbf{e}}
\def\ff{\mathbf{f}}
\def\span{\operatorname{span}}
\def\pr{\operatorname{pr}}
\newtheorem{theorem}{Theorem}
\newtheorem{proposition}[theorem]{Proposition}
\newtheorem{remark}[theorem]{Remark}
\newtheorem{lemma}[theorem]{Lemma}
\newtheorem{example}[theorem]{Example}
\newtheorem{corollary}[theorem]{Corollary}
\newtheorem*{one}{Theorem}
\newtheorem*{two}{Theorem}
\begin{document}

\maketitle
\begin{abstract}
We describe CW decompositions of complex Lagrangian Grassmannians, that contain as subcomplexes, CW decompositions of real Lagrangian Grassmannians by Schubert-Arnol'd cells. The degrees of attaching maps are explicitly computed in terms of quantities that can be read off from the corresponding shifted Young diagrams of mixed type. The signs are determined by a choice of lexicographical ordering on coordinates. As an immediate consequence, we obtain the homotopy extension property for real and complex Lagrangian Grassmannians. We also show some torsion classes in the integral homology of the real Lagrangian Grassmannian are contractible inside the complex Lagrangian Grassmannian.
\end{abstract}


\section{Introduction}
This paper solves a problem in algebraic topology motivated by geometric quantization. A key ingredient in geometric quantization is given by polarizations, which can loosely be described as some information to keep track of choices of observables that (Poisson) commute. At a linear level, these choices can be viewed as points on complex (or real) Lagrangian Grassmannians. As these Grassmannians have nontrivial topology, topologically nontrivial families of such choices exist. A famous example is the Maslov cycle. Its relevance in quantization was first observed by Maslov (cf Section II.2.2 \cite{Maslov}) and understood by Arnol'd in \cite{Arnold} in terms of the geometry of the real Lagrangian Grassmannian.

For real Lagrangian Grassmannians, higher dimensional homology classes were described by Arnol'd in his seminar (see footnote in Fuks \cite{FuchsMA}) and the cohomology ring structures were studied by Borel \cite{Borel} (Proposition 31.4, for the oriented Lagrangian Grassmannian), \cite{BorelMod2} (Theorem 12.1), and Fuks \cite{FuchsMA}. The torsion classes are all $2$-torsion, the $\Z_2$ cohomology ring is an exterior algebra, and the integral cohomology ring modulo torsion is another exterior algebra (see Section 22 of Vassilyev \cite{Vassilyev} for an exposition). The cellular decompositions into Schubert-Arnol'd cells were described Fuchs \cite{FuchsClassical}, with the incidence coefficients specified up to sign. The signs were recently computed explicitly in Rabelo \cite{Rabelo} from Lie theoretic techniques. 

The cohomology ring of complex Lagrangian Grassmannians were also studied by Borel \cite{Borel} (Theorem 26.1). Complex Lagrangian Grassmannians also have cellular decompositions into complex Schubert cells. These cells appear in only even degrees, so all attaching maps have degree zero, and there are no torsion classes in integral homology. These structures were described by Borel in \cite{BorelProc} (Theorem 3), Bernstein-Gel'fand-Gel'fand in \cite{BGG} (Theorem 2) and revisited by Pragacz in \cite{Pragacz} in the language of shifted Young diagrams, together with a purely combinatorial description of the integral cohomology ring. 

While these CW complexes are concrete combinatorial models for real and complex Lagrangian Grassmannians, they are insufficient to describe the induced map of chain complexes for the natural embedding of real Lagrangian Grassmannians inside complex Lagrangian Grassmannians. We construct CW decompositions of complex Lagrangian Grassmannians, that contain the CW complexes of real Lagrangian Grassmannians by real Schubert cells (Schubert-Arnol'd cells) as subcomplexes. They are subdivisions of the CW structure given by complex Schubert cells. Here are precise statements. 

\begin{one}
\label{thm:stratification}
There is a partition of the complex Lagrangian Grassmannian of $\R^{2n}$
\[ \Lag^\C(\R^{2n}) = \bigsqcup_{\lambda, \mu : \mu \le \lambda} C_{\lambda + i \mu} \]
where $\lambda$, $\mu$ are shifted Young diagrams associated to subsets of $\{1, 2, ..., n\}$ such that the following holds:
\begin{enumerate}
\item Each $C_{\lambda + i \mu}$ is diffeomorphic to a product of finitely many copies of $\R$, $\R^\times$, and $\C$, of real dimension $|\lambda| + |\mu|$.
\item The partition is a stratification with the following frontier condition:
\[ C_{\lambda+ i \mu} \subseteq \overline{C_{\lambda'+i\mu'}} \iff C_{\lambda +i \mu} \cap \overline{C_{\lambda' + i\mu'}}\neq \phi \iff \lambda \le \lambda' \text{ and } \mu \le \mu'.\]
\item When $\mu$ is the empty diagram, $C_{\lambda +i\mu}$  is a real Schubert cell $C_{\lambda}^{\R}$ of the embedded image of $\Lag^{\R}(\R^{2n})$ in $\Lag^{\C}(\R^{2n})$ by $\cdot \otimes_\R \C$. 
\item Each complex Schubert cell $C_\lambda$ of $\Lag^\C(\R^{2n})$ partitions into $\bigsqcup_{\mu: \mu \le \lambda} C_{\lambda + i\mu}$.
\end{enumerate}
\end{one}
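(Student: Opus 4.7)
The plan is to construct the desired stratification by refining the complex Schubert decomposition using a real isotropic flag in place of a complex one. Fix a symplectic basis $\ee_1,\ldots,\ee_n,\ff_1,\ldots,\ff_n$ of $\R^{2n}$ and the associated real isotropic flag $F_k = \span_\R(\ee_1,\ldots,\ee_k)$. For $L \in \Lag^\C(\R^{2n})$ I would track two integer invariants,
\[ a_k(L) = \dim_\C(L \cap F_k^\C), \qquad b_k(L) = \dim_\C\bigl(L \cap \overline L \cap F_k^\C\bigr), \]
where $\overline L$ is the conjugate of $L$ with respect to the real form $V = \R^{2n}$. The sequence $(a_k)$ encodes the complex Schubert type $\lambda = \lambda(L)$ exactly as in Pragacz; the additional data contained in $(b_k)$, together with its interaction with the pivots of $\lambda$, I would package into a second shifted diagram $\mu = \mu(L)$ satisfying $\mu \le \lambda$.

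The first step is to produce a normal form for each pair $(\lambda,\mu)$. Beginning from the reduced row-echelon complex normal form for $C_\lambda$, whose pivot columns are indexed by the rows of $\lambda$ and whose $|\lambda|$ free entries are complex, I would further normalize using the subgroup of real upper-triangular symplectic transformations preserving $F_\bullet$. This subgroup is a real form of the complex stabilizer that produces the original complex normal form, so it acts on each free entry by real scaling and permits each entry to be canonically placed into one of three types: a free complex parameter (contributing $\C$), a nonzero real parameter fixed by scaling away units (contributing $\R^\times$), or a free real parameter (contributing $\R$). Declaring $\mu$ to be the locus of boxes where the entry is of complex type gives the cell $C_{\lambda+i\mu}$, whose product structure and real dimension $|\lambda|+|\mu|$ follow immediately.

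Parts (3) and (4) should then drop out of the construction: when $\mu = \emptyset$ every free entry is real, so the normal form lies in $V\subset V_\C$ and recovers Arnol'd's normal form for $C_\lambda^\R$; and fixing $\lambda$ while varying $\mu$ exhausts the possible reality patterns of the free entries, yielding the refinement of $C_\lambda$ in (4).

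The main obstacle will be (2), the frontier condition. For the forward direction I would construct explicit one-parameter degenerations of normal forms in $C_{\lambda'+i\mu'}$: sending selected non-pivot parameters to zero with compensating basis rescalings collapses rows to reduce $\lambda'\to\lambda$, while sending selected imaginary parts to zero reduces $\mu'\to\mu$. The reverse direction, that $C_{\lambda+i\mu}\cap\overline{C_{\lambda'+i\mu'}}\neq\emptyset$ forces $\lambda\le\lambda'$ and $\mu\le\mu'$, should follow from upper semicontinuity of $a_k$ and $b_k$ under limits of Lagrangians --- the former is classical, and the latter uses that $L\cap\overline L$, as the maximal conjugation-invariant subspace of $L$, is upper semicontinuous in families. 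The delicate part, and where I expect the bulk of the work to lie, is verifying that the pointwise inequalities on $(a_k,b_k)$ translate cleanly into the combinatorial partial order $\mu\le\mu'$ and $\lambda\le\lambda'$; once this dictionary is pinned down, both directions of the frontier condition follow in parallel with the classical complex Schubert analysis.
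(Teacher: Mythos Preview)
Your proposed invariant $b_k(L) = \dim_\C(L \cap \overline L \cap F_k^\C)$ does not recover the paper's $\mu$, because it is not constant on the strata $C_{\lambda+i\mu}$. Take $n=2$, $\lambda$ the full staircase $\lambda_{\{2,1\}}$, and $\mu = \lambda$. The unique corner of $\mu$ is $(2,2)$, so $C_{\lambda+i\lambda}$ is the locus where $z_{22}$ has nonzero imaginary part while $z_{11}, z_{12}$ are unconstrained. At $(z_{11},z_{12},z_{22}) = (0,0,i)$ one computes $L\cap\overline L = \span_\C(\ff_1)$, one-dimensional; at $(0,i,i)$ one computes $L\cap\overline L = 0$. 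The reason is structural: a box in the interior $\mathring\mu$ contributes a full $\C$-factor, so its entry may be real or not without changing the stratum, whereas it does change $L\cap\overline L$. Hence no intrinsic real-form invariant of $L$ detects $\mu$, and your semicontinuity argument for the frontier condition cannot get started. Your alternative description of $\mu$ as ``the locus of boxes where the entry is of complex type'' is closer, but that locus is not in general a shifted Young diagram; one must pass to the smallest shifted Young diagram containing it, which is exactly what leaves the interior boxes unconstrained.

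The paper proceeds more directly. It defines $C_{\lambda+i\mu}$ as $(z^\lambda)^{-1}$ of the explicit product $\prod_{\mathring\mu}\C \times \prod_{\mu\setminus\mathring\mu}(\R\times i\R^\times)\times\prod_{\lambda\setminus\mu}\R$, with no further group normalization (none is available, since the reduced row echelon form is already unique). The $\R^\times$ factors record nonvanishing \emph{imaginary} parts at the corners of $\mu$, not real parameters modulo scaling as you suggest. For the frontier condition the paper reuses the one-parameter families $\varphi_\lambda^{\lambda'}(\Gamma, t)$ constructed by row reduction in Section~\ref{sec:schubert}: restricting $t$ to $\R^\times$ and inspecting the transition formulas \eqref{eq:znew}, \eqref{eq:zsame} shows that each $z_{k\ell}'$ differs from a real multiple of $z_{k\ell}$ only by terms in lexicographically later variables, which are already real whenever $(k,\ell)\in\lambda\setminus\mu$. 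This yields $\varphi_\lambda^{\lambda'}(C_{\lambda+i\mu}\times\R^\times)\subseteq C_{\lambda'+i\mu}$ (Proposition~\ref{prop:Jacobianmixed}) and hence the closure inclusion. The converse combines these same local homeomorphisms with the combinatorial Lemma~\ref{lem:specialsquare}, that $\mu\nleq\mu'$ forces some corner of $\mu$ to lie outside $\mu'$.
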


\begin{two}
\label{thm:cellcomplex}
Let $\{C_{\lambda + i \mu, \varepsilon}\}_{\varepsilon \in \mathcal{E}(\mu)}$ be the connected components of $C_{\lambda + i \mu}$. There is a CW decomposition of $\Lag^\C(\R^{2n})$ having the $C_{\lambda + i \mu, \varepsilon}$'s as cells and a closed formula for the degrees for the attaching maps $\partial C_{\lambda + i \mu, \varepsilon} \to C_{\lambda' + i \mu', \varepsilon'}$ (equations \eqref{eq:bdy1}, \eqref{eq:bdy2}, \eqref{eq:bdy3}).
\end{two}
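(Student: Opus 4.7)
The plan has two parts: produce explicit characteristic maps from the parameterizations of Theorem 1, and compute attaching degrees by analyzing each codimension-one limit in those coordinates.

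Theorem 1 presents each stratum $C_{\lambda + i\mu}$ as a product of copies of $\R$, $\R^\times$, and $\C$. The set $\mathcal{E}(\mu)$ indexes sign choices on the $\R^\times$ factors, so each connected component $C_{\lambda + i\mu, \varepsilon}$ is a product of copies of $\R$, $\R_{>0}$, and $\C$, hence diffeomorphic to an open Euclidean ball of real dimension $|\lambda| + |\mu|$. Composing with a radial identification with the open unit ball $\mathrm{int}(D^{|\lambda|+|\mu|})$ gives a candidate characteristic map. I then check that it extends continuously to the closed disk with image in the $(|\lambda|+|\mu|-1)$-skeleton: continuity is a direct calculation in the coordinates, and that every coordinate limit lies in a strictly lower stratum is the content of the frontier condition in Theorem 1(2). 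Finiteness of the CW structure is immediate from the finiteness of the stratification.

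For the degree onto a boundary cell $C_{\lambda'+i\mu',\varepsilon'}$ of real codimension one, I localize to the portion of the boundary sphere $\partial D^{|\lambda|+|\mu|}$ that attaches to that cell. This portion sits along a codimension-one wall in the product structure, and the three types of such walls --- sending an $\R$ coordinate to one of its ends, sending an $\R^\times$ coordinate to $0$ or to $\pm\infty$, or degenerating a $\C$ coordinate --- match up with the three formulas \eqref{eq:bdy1}, \eqref{eq:bdy2}, \eqref{eq:bdy3}. In each case the degree is computed by pulling back the canonical orientation on the quotient sphere $\overline{C_{\lambda'+i\mu',\varepsilon'}}/\partial$ to the corresponding slice of the boundary sphere, then comparing with the orientation induced on that slice by the lexicographical ordering on the source coordinates.

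The main obstacle is the sign bookkeeping. Each codimension-one limit involves a permutation between the source and target coordinate orderings, and the sign depends on the position of the collapsing coordinate relative to the others. To avoid summing over arbitrary permutations, I expect to need a careful normal form for the parameterization in Theorem 1 in which limits interact with higher coordinates in a controlled way; only then should the signs collapse into the closed combinatorial expressions in $\lambda, \mu, \lambda', \mu'$ promised by the theorem. As a sanity check, when $\mu = \mu' = \emptyset$ the formulas must specialize to Rabelo's calculation of incidence signs for the real Schubert-Arnol'd cells in $\Lag^{\R}(\R^{2n})$.
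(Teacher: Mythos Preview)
Your plan has a genuine gap in the construction of characteristic maps. Taking the product parameterization of $C_{\lambda+i\mu,\varepsilon}$ and composing with a radial identification to an open ball does not automatically yield a map that extends continuously to the closed ball. The frontier condition in Theorem~1(2) only tells you that $\overline{C_{\lambda+i\mu,\varepsilon}}$ is a union of lower strata; it does not control how sequences going to infinity in the $z^\lambda$-coordinates converge in the Grassmannian, and in particular it does not guarantee that rays in your radial model have well-defined limits. The paper avoids this entirely: it builds a closed ball $E(\lambda)$ out of unitary Lagrangian frames (via Gram--Schmidt on reduced row echelon forms), proves $E(\lambda)$ is a closed ball by an inductive fibration argument (Proposition~\ref{prop:attachingmaps}), and obtains the mixed-type characteristic maps by restricting $\pi:E(\lambda)\to\overline{C_\lambda}$ to the preimage of $\overline{C_{\lambda+i\mu,\varepsilon}}$. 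That restriction step is what makes the extension automatic.

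Your analysis of the codimension-one boundary is also too coarse. The three formulas \eqref{eq:bdy1}--\eqref{eq:bdy3} do not correspond to ``an $\R$ coordinate going to an end / an $\R^\times$ coordinate going to $0$ or $\pm\infty$ / a $\C$ coordinate degenerating.'' Case~\eqref{eq:bdy3} is indeed a coordinate wall ($y_{\star_\mu}\to 0$), but case~\eqref{eq:bdy2} --- where $\lambda$ drops by one box and $\mu$ is unchanged --- is not. Approaching $C_{\lambda+i\mu}$ from inside $C_{\lambda'+i\mu}$ sends the new coordinate $z'_{\star,\star+j_\star^\vee-1}$ to infinity while simultaneously changing \emph{all} the other $z'$-coordinates according to the transition formulas \eqref{eq:znew}, \eqref{eq:zsame}. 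The degree in this case is the sign of the real Jacobian of the explicit transition map $\varphi_\lambda^{\lambda'}$ restricted to $U_{\lambda+i\mu}\times\R^\times$, evaluated at $t=\pm 1$ (Proposition~\ref{prop:Jacobianmixed}); the ``careful normal form'' you anticipate needing is precisely this $\varphi_\lambda^{\lambda'}$, and producing it is most of the work. Without it you cannot see, for instance, why the sign of $\varepsilon$ flips on the boxes in $(\mu\setminus\mathring\mu)\cap c_\star$ but not elsewhere, which is what determines $\varepsilon^\pm$ in \eqref{eq:bdy2}.
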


By a CW complex, we refer to the definitions of Sections 4, 5 of Whitehead \cite{Whitehead}. In particular, these definitions require attaching maps (or characteristic maps) to be realized by actual maps on the boundaries of balls. We note that in the literature the terms cell, cellular, CW complexes are often used in a weaker sense (eg Borel \cite{BorelProc}), or only the incidence coefficients are provided (eg Fuchs \cite{FuchsClassical}, Rabelo \cite{Rabelo}). We provide an explicit construction of the attaching maps in Proposition \ref{prop:attachingmaps}.

We call the cells of this complex \emph{Schubert cells of mixed type.} They are indexed by shifted Young diagrams of Pragacz \cite{Pragacz} with additional labels on the boxes. The degrees of the attaching maps are obtained using a straightforward, but involved computation in row reduction. It agrees up to sign with Fuchs \cite{FuchsClassical} and Rabelo \cite{Rabelo} (and has a different sign convention from \cite{Rabelo}). Some low dimensional examples are given in Figures \ref{fig:CP1} and \ref{fig:LagR4}.

An immediate consequence of this realization is that we have the homotopy extension property for the pair $(\Lag^\C(\R^{2n}), (\cdot \otimes_\R \C)(\Lag^\R(\R^{2n})))$. This construction is compatible with adding more variables (Corollary \ref{cor:HEP}). In fact, we can realize $\Lag^\R(\R^{2n})$ as a subcomplex of any $\Lag^\C(\R^{2(n+m)})$, and with all inclusion maps being compatible.

This construction is relevant in geometric quantization for the following reason. Geometric quantization requires a passage to the complex numbers, as observables are quantized from real valued smooth functions on symplectic manifolds to complex linear Hermitian operators on a Hilbert space. Locally, this task is performed by tensoring with $\C$. Our construction enables us to understand how some topologically nontrivial families of real Lagrangian subspaces behave after complexification. We refer to Corollary \ref{cor:nullhomotopic} for some examples.

In Section \ref{sec:schubert} we revisit the classical Schubert (Schubert-Arnol'd) calculus for real and complex Lagrangian Grassmannians in the language of shifted Young diagrams. In Section \ref{sec:mixed} we describe the Schubert calculus for Schubert cells of mixed type.

\subsection{Notation}
\label{subsec:notation}

Let $\omega$ be a nondegenerate antisymmetric $\R$-bilinear form on $\R^{2n}$, and we will identify $ \R^{2n} \otimes_\R \C \cong \C^{2n}$ with $\vv\otimes_\R a \mapsto a\vv$. We will denote the $\C$ bilinear extension of $\omega$ to $\C^{2n}$ by $\omega^\C$. A real Lagrangian subspace $L$ of $\R^{2n}$ is an $n$ dimensional real vector subspace of $\R^{2n}$ such that the restriction of $\omega$ to $L \times L$ vanishes. A complex Lagrangian subspace $\Gamma$ of $\C^{2n}$ is an $n$ dimensional complex vector subspace of $\C^{2n}$ such that the restriction of $\omega^\C$ to $\Gamma \times \Gamma$ vanishes. We will refer to the Grassmannian manifold (cf Section 3.3 of Arnol'd \cite{Arnold}) consisting of all real Lagrangian subspaces of $\R^{2n}$ by $\Lag^{\R}(\R^{2n})$ and the Grassmannian manifold consisting of all complex Lagrangian subspaces of $\C^{2n}$ as $\Lag^{\C}(\R^{2n})$.
\begin{figure}
\centering \includegraphics[width = 3cm]{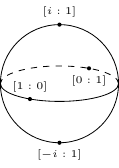}
\ytableausetup{centertableaux, boxsize = 5pt}
\caption{$\Lag^{\C}(\R^2)$ can be identified with the complex projective line. In this case the Schubert cells of mixed type are $C_{\ytableaushort{+}} = \{ [x + iy: 1] : y > 0\}$, $C_{\ytableaushort{-}} = \{ [x + iy: 1] : y < 0\}$, $C_{\ytableaushort{\circ}} = \{ [x: 1]\}$, and $C_{\phi} = \{ [1:0]\}$.}
\label{fig:CP1}
\end{figure}

\begin{figure}
\centering \includegraphics{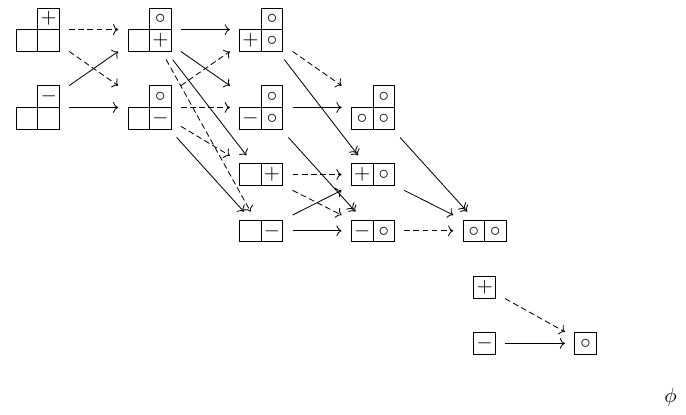}
\caption{A diagrammatic description of the CW decomposition of $\Lag^\C(\R^4)$ by Schubert cells of mixed type. An empty box, a box with a circle, a box with a $\pm$ denotes respectively, a copy of $\C$, a copy of $\R$, and a copy of $\R \times i\R^{\pm}$. Single headed arrows represent attaching maps of degree $\pm 1$, double headed arrows represent attaching maps of degree $\pm 2$, and filled, dashed arrows represent, respectively attaching maps with positive, negative signs. No arrows means either the cells are not incident or that they attach with degree $0$. The CW decomposition of $\Lag^{\R}(\R^4)$ is the subcomplex consisting of rightmost diagrams in every row except the first row, and the CW decomposition of $\Lag^{\C}(\R^2)$ is in the bottom right corner.}
\label{fig:LagR4}
\end{figure}
\section{Review of Schubert cells}
\label{sec:schubert}

\subsection{Shifted Young diagrams of Schubert cells}
\label{subsec:SYD}
For the real Lagrangian Grassmannian, their Schubert stratification has been developed independently by Arnol'd (cf \cite{FuchsClassical}). We will review the Schubert cells of the complex Lagrangian Grassmannian following Pragacz \cite{Pragacz}, with their indexing by shifted Young diagrams.

There are $2^n$ Schubert cells of $\Lag^\C(\R^{2n})$, corresponding to the $2^n$ subsets $J$ of $[n]:= \{ 1, 2, \cdots, n\}$.

The reduced row echelon forms of these Schubert cells are described as follows. We will fix a Darboux basis of $\R^{2n}$, $\{\ee_1, \cdots, \ee_n, \ff_1, \cdots, \ff_n\}$, so that 
\[ \omega(\ee_j, \ee_k) = \omega(\ff_j, \ff_k) = 0\quad \text{and} \quad \omega(\ee_j, \ff_k) = \begin{cases} 1 & \mbox{if } j = k \\ 0 & \mbox{otherwise.}\end{cases}\]

We will denote
\[ \mathbf{v} = \sum_{j = 1}^n q_j \ee_j + \sum_{k = 1}^n p_k \ff_k \in \C^{2n}\]
as a row vector, with the $p_k$'s in reverse order:
\[ [\mathbf{v}]_{\{\ee, \ff\}} := \left[ \begin{array}{ccc|ccc} q_1 & \cdots & q_n & p_n & \cdots & p_1 \end{array} \right].\]

If $\Gamma$ is a complex Lagrangian subspace of $\C^{2n}$, there is a unique $n \times 2n$ complex matrix in reduced row echelon form corresponding to $\Gamma$ in the fixed Darboux basis. Since $\Gamma$ is Lagrangian, $\ee_k$ and $\ff_k$ cannot both be pivot locations for two different row vectors of the reduced row echelon form. Let $I:= [n] \backslash J$. Then denote by $j_k$ the $k$th smallest element of $J$ and by $i_\ell$ the $\ell$th smallest element of $I$ so that $J = \{ j_k\}_{k = 1}^{|J|}$ and $I = \{ i_\ell\}_{\ell = 1}^{|I|}$. Then the reduced row echelon form corresponding to $J$ is such that the $k$th row has pivots at the $k$th element of \[ \ee_{i_1}, \cdots, \ee_{i_{|I|}}, \ff_{j_{|J|}}, \cdots \ff_{j_1},\]
with additional relations (equations \eqref{eq:isotropy1}, \eqref{eq:isotropy2}) imposed on the entries by the Lagrangian condition.

More precisely, an $n \times 2n$ matrix with complex entries, in the reduced row echelon form of corresponding to $J$ has its $k$th row vector as $[\vv_k]_{\{\ee, \ff\}}$ if $1 \le k \le |I|$ and as $[\ww_{n-k+1}]_{\{\ee, \ff\}}$ if $|I| < k \le n$, where
\begin{eqnarray*}
\vv_a &:=& \sum_{\substack{1 \le i < i_a\\ i \notin I}} r_{ai}\ee_i + \ee_{i_a} \quad 1 \le a \le |I|\\
&=:& \sum_{\substack {1 \le i \le n \\ i \notin I}} r_{ai} \ee_i\\
\ww_b &:=&\sum_{\substack{ 1 \le j \le n \\ j \notin I}} q_{bj} \ee_j + \sum_{\substack{j_b < \ell \le n\\ \ell \notin J}} p_{b\ell} \ff_{\ell} + \ff_{j_b} \quad 1 \le b \le |J|\\
&=:& \sum_{\substack{ 1 \le j \le n \\ j \notin I}} q_{bj} \ee_j + \sum_{\substack{1 \le \ell \le n\\ \ell \notin J}} p_{b\ell} \ff_{\ell}.
\end{eqnarray*}
The additional relations imposed by the isotropy condition are the following
\begin{eqnarray}
    \omega^\C(\vv_a, \ww_b) = &r_{aj_b} + p_{b i_a} & = 0 \quad \text{if } i_a > j_b \label{eq:isotropy1}\\
    \omega^\C(\ww_a, \ww_b) = &q_{aj_b} - q_{bj_a} & = 0.\label{eq:isotropy2}
\end{eqnarray}

We can assign and count the number of independent parameters is as follows. All the coefficients of $\vv_a$'s are determined by the coefficients of $\ww_b$'s, and among the $(2n - j_b) - (|I| + (|J| - b))$ possibly nonzero, nonpivot coefficients of $\ww_b$, $b-1$ are determined by the coefficients of $\ww_{b'}$ for $b' < b$. So each $\ww_b$ contributes $j_b^\vee:= n - j_b + 1$ additional independent parameters, and there are a total of $\sum_{b = 1}^{|J|} j_b^\vee$ independent parameters.

The sequence $j_1^\vee > \cdots > j_{|J|}^\vee$ (or equivalently, $J^\vee := \{ j_b^\vee\}_{b = 1}^{|J|}$) determines a shifted Young diagram
\[ \lambda_{J^\vee}:= \{ (k, \ell) \in \Z \times \Z : 1 \le k \le |J|,\quad k \le \ell \le j_k^\vee + k - 1\}\]
and we will identify $\lambda_{J^\vee}$ with the box diagram corresponding to it. Following the indexing conventions of entries of matrices, we will take the first component of $\Z \times \Z$ to denote the vertical position of the boxes, and the second component of $\Z \times \Z$ to denote the horizontal position of the boxes. The cardinality of $\lambda_{J^\vee}$ is $|\lambda_{J^\vee}|= \sum_{b = 1}^{|J|} j_b^\vee$. We will denote an element of $\{\lambda_{J^\vee}\}_{J \subseteq [n]}$ by $\lambda$ to refer to a shifted Young diagram without having to specify its shape. We will denote both the empty subset of $[n]$ and the empty shifted Young diagram by $\phi$, condoning the notation $\lambda_\phi = \phi$.

Let
\[
    z_{ k \ell} :=\begin{cases} q_{\ell j_k} & \text{if}\quad 1 \le k \le |J|, \quad k \le \ell \le |J|\\
    -r_{n+1 - \ell, j_k} & \text{if} \quad 1 \le k \le |J|, \quad |J| \le \ell \le n.
\end{cases}
\]

Suppressing the dependence on the fixed choice of Darboux basis, let $C_{\lambda_{J^\vee}}$ (or $C_\lambda$) be the set of complex Lagrangian subspaces of $\C^{2n}$ consisting of row spaces of $n\times 2n$ complex matrices in the reduced row echelon form corresponding to $J$ (or $\lambda$).

Denote by $\C_{(k, \ell)} := \C \times\{(k, \ell)\} \subseteq \C \times \Z^2$. Then we have homeomorphisms
\[ z^\lambda: C_\lambda \cong \prod_{(k, \ell) \in \lambda} \C_{(k, \ell)}.\]
When $\lambda = \phi$ we identify $\prod_{(k, \ell) \in \phi} \C_{(k, \ell)}$ with $\{0\}$.
We will assume the lexicographic order for the elements of $\lambda$, so that $(k, \ell) < (k', \ell')$ if either $k < k'$, or both $k = k'$ and $\ell < \ell'$. With this ordering we will denote again
\[ z^\lambda: C_\lambda \cong \C^{|\lambda|}.\]

From the existence and uniqueness of reduced row echelon forms, we have
\[ \Lag^\C(\R^{2n}) = \bigsqcup_{\lambda \subseteq \lambda_{[n]}} C_{\lambda}.\]

\begin{remark}[Arnol'd stratification]

If $\Gamma_{\phi}$ is the unique element of $C_\phi$, then $\dim_{\C} (\Gamma \cap \Gamma_{\phi}) = k$ if and only if $\Gamma \in C_{\lambda_{J^\vee}}$ and $|I| = k$ (cf 3.2.0 of Arnol'd \cite{Arnold} ). Suppose $\overline{j}_b^\vee:= j_{|J|+1-b}^\vee$, and $\pr_{\{\ff\}}$ denotes the projection from $\C^{2n}$ to 
\[ \span_{\C} \{\ff_1, \cdots, \ff_n\}\] along 
\[\span_{\C}(\ee_1, \cdots, \ee_n).\] Then $\Gamma \in C_{\lambda_{J^\vee}}$ if and only if
\[ \pr_{\{\ff\}} \Gamma \cap \span\{\ff_n, \cdots, \ff_{n+1-\overline{j}}\} =  b \quad \text{for} \quad \overline{j}_b^\vee \le  \overline{j} < \overline{j}_{b+1}^\vee.\]
For real coefficients, the linear extension of $\ee_i \mapsto \ff_{n+1-i}$, $\ff_i \mapsto -\ee_{n+1-i}$ maps $C_{\lambda_{J^\vee}}$ to $e\{\overline{j}_1^\vee, \cdots \overline{j}_{|J|}^\vee\}$ in 2.2 of Fuchs \cite{FuchsClassical}. 
\end{remark}

\begin{remark}[Maslov cycles and Maslov-Arnol'd cycles]
$C^{\R}_{\lambda_{[n]}}$ is the contractible open dense subset of all the Lagrangian subspaces transverse to the unique Lagrangian subspace of $C^{\R}_{\phi}$. So $\Lag^{\R}(\R^{2n}) \setminus C^{\R}_{\lambda_{[n]}} = \bigsqcup_{\lambda \subsetneq \lambda_{[n]}} C_{\lambda}^{\R}$ is the Maslov cycle (Sections 3.5, 3.6 of Arnold \cite{Arnold}, Section 2 of Robbin-Salamon \cite{RobbinSalamon}). To compare with the Maslov-Arnol'd homology cycles $A_k$, $B_k$, and $\Lambda_n^k$ of Fuks \cite{FuchsMA}, we note that $\dim_\C (\Gamma \cap \span_\C \{\ee_1, \cdots, \ee_k\}) = \ell$ if and only if $\Gamma \in C_{\lambda_{J^\vee}}$ where $|I \cap [k]| = \ell$.
\end{remark}

\begin{example}
If $J = \{1, 3, 4\}$ and $I =\{ 2, 5\}$, the pivot locations are at $\ee_2$, $\ee_5$, $\ff_4$, $\ff_3$, $\ff_1$, and the matrix is of the form
\[
\left[
\begin{array}{ccccc|ccccc}
\ast & 1 & 0 & 0 & 0 & 0 & 0 & 0 & 0 & 0 \\
\ast & 0 & \ast & \ast & 1 & 0 & 0 & 0 & 0 & 0 \\
\ast & 0 & \ast & \ast & 0 & \ast & 1 & 0 & 0 & 0\\
\ast & 0 & \ast & \ast & 0 & \ast & 0 & 1 & 0 & 0 \\
\ast & 0 & \ast & \ast & 0 & \ast & 0  & 0 & \ast & 1
\end{array}
\right].
\]

Since the pivot locations are at $r_{a i_a} = p_{b j_b} = 1$ (or $p_{a j_a} = p_{b j_b} = 1$), the position of the entries $r_{a j_b}, r_{a i_a}, p_{b j_b}, p_{b j_a}$ (or $q_{a j_b}, p_{a j_a}, p_{b j_b}, q_{b j_a}$) in the matrix are vertices of a parallelogram.

Equations \eqref{eq:isotropy1} are $r_{11} = -p_{12}$, $r_{21} = -p_{15}$, $r_{23} = -p_{25}$, and $r_{24} = -p_{35}$ so that the matrix form is

\[
\left[
\begin{array}{ccccc|ccccc}
-p_{12} & 1 & 0 & 0 & 0 & 0 & 0 & 0 & 0 & 0 \\
-p_{15} & 0 & -p_{25} & -p_{35} & 1 & 0 & 0 & 0 & 0 & 0 \\
\ast & 0 & \ast & \ast & 0 & p_{35} & 1 & 0 & 0 & 0\\
\ast & 0 & \ast & \ast & 0 & p_{25} & 0 & 1 & 0 & 0 \\
\ast & 0 & \ast & \ast & 0 & p_{15} & 0  & 0 & p_{12} & 1
\end{array}
\right]
\]
and equations \eqref{eq:isotropy2} are $q_{31} = q_{14}$, $q_{33}=q_{24}$, and $q_{21} = q_{13}$
\[
\left[
\begin{array}{ccccc|ccccc}
\ast & 1 & 0 & 0 & 0 & 0 & 0 & 0 & 0 & 0 \\
\ast & 0 & \ast & \ast & 1 & 0 & 0 & 0 & 0 & 0 \\
q_{14} & 0 & q_{24} & q_{34} & 0 & \ast & 1 & 0 & 0 & 0\\
q_{13} & 0 & q_{23} & q_{24} & 0 & \ast & 0 & 1 & 0 & 0 \\
q_{11} & 0 & q_{13} & q_{14} & 0 & \ast & 0 & 0 & \ast & 1
\end{array}
\right].
\]

$J^\vee = \{5, 3, 2\}$, so the corresponding shifted Young diagram is
\[ \lambda_{J^\vee} = 
\ytableausetup{centertableaux, boxsize = 5pt} \begin{ytableau} \none & \none & & &\none \\ \none & &&& \none\\ &&&&\end{ytableau}.\]

The independent parameters are labelled by
\[
\left[
\begin{array}{ccccc|ccccc}
-z_{15} & 1 & 0 & 0 & 0 & 0 & 0 & 0 & 0 & 0 \\
-z_{14} & 0 & -z_{24} & -z_{34} & 1 & 0 & 0 & 0 & 0 & 0 \\
z_{13} & 0 & z_{23} & z_{33} & 0 & \ast & 1 & 0 & 0 & 0\\
z_{12} & 0 & z_{22} & \ast & 0 & \ast & 0 & 1 & 0 & 0 \\
z_{11} & 0 & \ast & \ast & 0 & \ast & 0 & 0 & \ast & 1
\end{array}
\right],
\]
where, as in Cartesian coordinates, the horizontal location is given by the first index. Equivalently, the independent parameters are labelled by
\[
\left[
\begin{array}{ccccc|ccccc}
\ast & 1 & 0 & 0 & 0 & 0 & 0 & 0 & 0 & 0 \\
\ast & 0 & \ast & \ast & 1 & 0 & 0 & 0 & 0 & 0 \\
\ast & 0 & \ast & z_{33} & 0 & z_{34} & 1 & 0 & 0 & 0\\
\ast & 0 & z_{22} & z_{23} & 0 & z_{24} & 0 & 1 & 0 & 0 \\
z_{11} & 0 & z_{12} & z_{13} & 0 & z_{14} & 0 & 0 & z_{15} & 1
\end{array}
\right],
\]
where, as in matrix component notation, the horizontal location is given by the second index. Asterisks denote dependent parameters. The labels for the independent parameters can be read off from their corresponding locations in the shifted Young diagrams (Figure \ref{fig:labelling}).
\begin{figure}
\centering \includegraphics[width = 0.2\textwidth]{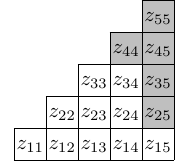}
\caption{The labels for the independent parameters can be read off from their corresponding locations in the shifted Young diagrams.}
\label{fig:labelling}
\end{figure}

\end{example}

\subsection{Attaching maps of Schubert cells}

We will denote by $\lambda \le \lambda'$ if $\lambda$ is contained in $\lambda'$, or equivalently, if there exist $J, J' \subseteq [n]$ such that $\lambda = \lambda_{J^\vee}$, $\lambda' = \lambda_{(J')^\vee}$, $|J| \le |J'|$ and $j_k^\vee \le (j_k')^\vee$ for all $1 \le k \le |J|$. Since every nonempty $\lambda$ contains $(1, 1)$, the containment of shifted Young diagrams assumes they are aligned on the bottom left corner. For a comparison with the Bruhat ordering, we refer the reader to Proposition 6.1 of Ikeda and Naruse \cite{IkedaNaruse}.

Every $C_\lambda$ is homeomorphic to $\C^{|\lambda|}$, so when tubular neighborhoods of $C_\lambda$ inside some $\overline{C_{\lambda'}}$ exist, they are homeomorphic to trivial bundles over $C_\lambda$. We will construct such homeomorphisms in detail when $\lambda$, $\lambda'$ are such that $\lambda \le \lambda'$ and $|\lambda'| = |\lambda|+1$ to obtain information about degrees of attaching maps.

There are two cases to consider, i) when $\lambda'$ has more rows than $\lambda$, and ii) when $\lambda$ and $\lambda'$ have the same number of rows. In both cases we setup some notation, that $\lambda = \lambda_{J^\vee}$ and $\lambda' = \lambda_{(J')^\vee}$, and that $\lambda' \setminus \lambda = \{ (\star, \star + (j_\star')^\vee - 1)\}$.

If $|J'| > |J|$ then $\star = |J'| = |J|+1$, and $(j_{\star}')^\vee = 1$. So $j_{\star}' = n$.  Since all $j_k^\vee = (j_k')^\vee$ for $1 \le k \le |J|$, $j_\star' \notin J$, so  $j_\star' \in I$, and $I' = I \setminus\{j_{\star}'\}$, $J' = J \sqcup \{j_{\star}'\}$, and $i_{|I|} = j_{\star}' = n$. Suppose the reduced row echelon form of $\Gamma \in C_\lambda$ has row vectors $\{\vv_a, \ww_b\}_{1 \le a \le |I|, 1 \le b \le |J|}$. Let
\[\ww'_t := \vv_{|I|} + t \ff_{j_\star'} \quad t \in \C. \]
Then $\Gamma_t := \span_\C \{ \vv_a, \ww'_t, \ww_b\}_{1 \le a \le |I|-1, 1 \le b \le |J|}$ is always complex Lagrangian, because
\begin{eqnarray}
 \omega^\C(\ww_t', \vv_a) &=& 0 \nonumber\\
    \omega^\C(\ww_t', \ww_b) &=& -tq_{b, j_\star'} = 0.\label{eq:newLagrangian}
\end{eqnarray}
When $t\neq 0$, we can row reduce these vectors as
\begin{eqnarray}
\vv'_a(t)  &:=& \vv_a \quad \quad \quad \quad \quad \quad i_a < j_\star' = n \nonumber\\
\ww'_{\star}(t) &:=& t^{-1} \ww'_t \nonumber \\
\ww'_b (t) &:=& \ww_b - t^{-1} z_{b, \star}\ww_t' \quad j_b < j_\star' = n. \label{eq:rrednew}
\end{eqnarray}

So $\Gamma_t$ is represented by the $n \times 2n$ complex matrix with $k$th row $[\vv'_k(t)]_{\{ \ee, \ff\}}$ for $1 \le k \le |I|-1$, $[\ww_\star'(t)]_{\{\ee, \ff\}}$ for $k = |I|$, and $[\ww'_{n-k+1}(t)]_{\{\ee, \ff\}}$ for $1 \le k \le |J|$. It is in reduced row echelon form with pivots at
\[\ee_{i_1}, \cdots, \ee_{i_{|I|-1}}, \ff_{j_\star'}, \ff_{j_{|J|}}, \cdots \ff_{j_1}.\]
So $\Gamma_t \in C_{\lambda_{(J')^\vee}}$ whenever $t \in \C^\times$. By construction $\Gamma_0 = \Gamma$, so
\[ \varphi_{\lambda}^{\lambda'} (\Gamma, t) := \Gamma_t\]
is a local homeomorphism from $C_\lambda \times \C^{\times}$ to $C_{\lambda'}$.

If $|J| = |J'|$, then $(j_k')^\vee = j_k'$ for all $1 \le k \le |J|$, except when $k = \star$, in which case $(j_\star')^\vee = j_\star^\vee + 1$, so $j_\star' + 1= j_\star$. This also implies that $(j_\star')^\vee \notin J^\vee$ and $j_\star^\vee \notin (J')^\vee$. So $I' = I \sqcup \{ j_\star\} \setminus \{j_\star'\}$ and $J' = J \sqcup \{j_\star'\} \setminus \{ j_\star\}$. Suppose the reduced row echelon form of $\Gamma \in C_\lambda$ has row vectors $\{\vv_a, \ww_b\}_{1 \le a \le |I|, 1 \le b \le |J|}$. Let $\circledast$ be such that $i_\circledast = j_{\star}'$, and
\begin{eqnarray*}
    \vv'_t &:=& \vv_{\circledast} - t \ee_{j_\star}\\
    \ww'_t &:=& \ww_{\star} + t \ff_{j_\star'} \quad t\in \C.
    \end{eqnarray*}
Then $\Gamma_t := \span_\C \{ \vv_a, \vv'_t, \ww'_t, \ww_b\}_{i_a \in I \setminus \{ j_\star '\}, j_b \in J \setminus \{ j_\star\}}$ is complex Lagrangian for all $t \in \C$, because
\begin{eqnarray}
    \omega^\C(\vv_t', \ww_b) =& -t\cdot p_{b, j_\star} &= 0 \nonumber\\
    \omega^\C(\ww_t', \ww_b) =& t \cdot q_{b, j_\star'}& = 0 \nonumber\\
    \omega^\C(\ww_t', \vv_a) =& t \cdot r_{a, j_\star'}& = 0 \nonumber\\
    \omega^\C(\vv_t', \ww_t') =& -t + t &= 0. \label{eq:sameLagrangian}
\end{eqnarray}
When $t \neq 0$, we can row reduce these vectors as follows
\begin{equation} \label{eq:rredsamev}
\vv'_a (t) := \begin{cases} \vv_a & \text{ if } i_a < j_\star'\\
-t^{-1}\vv_t' & \text{ if } i_a = j_\star' \\
\vv_a - t^{-1} z_{\star, n+1-a} \vv_t' & \text{ if } i_a > j_\star'.\end{cases}
\end{equation}
and
\begin{equation} \label{eq:rredsamew}
\ww'_b (t) := \begin{cases} \ww_b + t^{-1} z_{\star, b} \vv_t' & \text{ if } j_b > j_\star \\
t^{-1} \ww_t' + t^{-2} z_{\star, \star}\vv_t' & \text{ if } j_b = j_\star \\
\ww_b + t^{-1} z_{b, \star} \vv_t' - z_{b, \star + j_\star^\vee}\ww_\star'(t) & \text{ if } j_b < j_\star.\end{cases}
\end{equation}
Then the $n\times 2n$ complex matrix with $k$th row $[\vv'_k(t)]_{\{ \ee, \ff\}}$ for $1 \le a \le |I|$, $[\ww'_{n-k+1}(t)]_{\{\ee, \ff\}}$ for $1 \le k \le |J|$ is in reduced row echelon form with pivots at \[\ee_{i_1}, \cdots, \ee_{j_\star}, \cdots, \ee_{i_{|I|}}, \ff_{j_{|J|}}, \cdots, \ff_{j_{\star - 1}}, \ff_{j_\star'}, \ff_{j_{\star + 1}}, \cdots, \ff_{j_1}.\] Thus $\Gamma_t \in C_{\lambda'}$ when $t \in \C^\times$.

By construction $\Gamma_0 = \Gamma$, and again $\varphi_{\lambda}^{\lambda'} (\Gamma, t) := \Gamma_t$  is a local homeomorphism from $C_\lambda \times \C^{\times}$ to $C_{\lambda'}$.

We can obtain charts of $\overline{C_{\lambda'}}$ in the neighborhood of points in $C_\lambda$ from local inverses of $\varphi_{\lambda}^{\lambda'}$. From the row reduction prescription of equations \eqref{eq:rrednew}, \eqref{eq:rredsamev}, and \eqref{eq:rredsamew} we can explicitly compute the transition map of $C_{\lambda'}$
\[z^{\lambda'} \circ \varphi_{\lambda}^{\lambda'} \circ ((z^{\lambda})^{-1} \times \Id_{\C^\times}): \C^{|\lambda|} \times \C^\times \to \C^{|\lambda'|}.\]

If $|J'| > |J|$, they are

\begin{subnumcases}{\label{eq:znew}  z_{k\ell}' =}
z_{k\ell} + t^{-1} z_{k, \star} z_{\ell, \star} & \text{ if } $\ell < \star$ \label{eq:leftofcolumn}\\
-t^{-1}z_{k\ell} & \text{ if } $\ell < k =\star$ \label{eq:undercolumn}\\
z_{k\ell} & \text{ if } $\ell > \star$ \label{eq:rightofcolumn}\\
t^{-1} & \text{ if } $k = \ell = \star.$ \label{eq:topcolumn}
\end{subnumcases}

If $|J'| = |J|$, they are
\begin{subnumcases}{\label{eq:zsame} z_{k\ell}' =}
z_{k\ell}  - t^{-1} z_{k, \star} z_{\ell, \star + j_\star^\vee} -t^{-1} z_{\ell, \star} z_{k, \star + j_\star^\vee} \notag \\ \quad + t^{-2} z_{\star, \star} z_{k, \star + j_\star^\vee} z_{\ell, \star + j_\star^\vee} & if $k \le \ell < \star$ \label{eq:leftofarch}\\
    z_{k\ell} + t^{-1} z_{\star, \ell} z_{k, \star + j_\star^\vee} & \text{ if } $k < \star < \ell < \star + j_\star^\vee$ \label{eq:underarch}\\
    t^{-1} z_{k\ell} - t^{-2} z_{\star, \star} z_{k, \star + j_\star^\vee} & \text{ if } $k < \star = \ell$\label{eq:leftarch}\\
    t^{-1} z_{k\ell} & \text{ if } $k = \star < \ell < \star + j_\star^\vee$\label{eq:toparch}\\
    -t^{-1} z_{k\ell} & \text{ if } $k < \star, \ell = \star + j_\star^\vee$\label{eq:rightarch}\\
    t^{-2} z_{k\ell} & \text{ if } $k = \ell = \star$ \label{eq:leftcornerarch}\\
    t^{-1} & \text{ if } $k = \star, \ell = \star + j_\star^\vee$\label{eq:rightcornerarch}\\
    z_{k\ell} & \text{ if } $k > \star \text{ or } \ell > \star+ j_\star^\vee$ \label{eq:topandrightofarch}

\end{subnumcases}
for $(k, \ell) \in \lambda'$. We refer to Figure \ref{fig:zsame}, \ref{fig:ACR}, and Remark \ref{rmk:ACR} for a diagrammatic description of the nine cases in terms of shifted Young diagrams.

\begin{figure}
\centering \includegraphics[width = 0.6\textwidth]{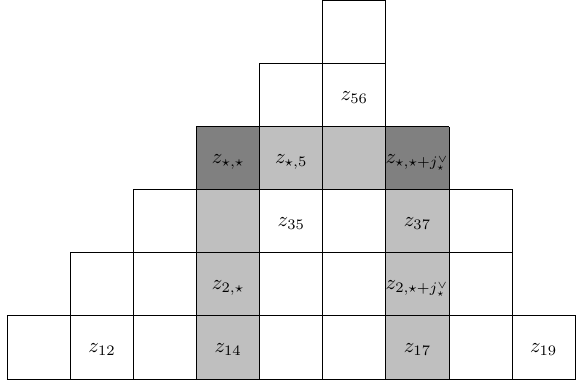}
\caption{Terms appearing in some equations of \eqref{eq:zsame}: $z_{12}' = z_{12}- t^{-1} z_{2, \star} z_{1, \star + j_\star^\vee} -t^{-1} z_{1, \star} z_{2, \star + j_\star^\vee} + t^{-2} z_{\star, \star} z_{1, \star + j_\star^\vee} z_{2, \star + j_\star^\vee}$, $z_{35}' = z_{35} - t^{-1} z_{\star, 5} z_{37}$, $z_{56}' = z_{56}$, $z_{19}' = z_{19}$. Additional terms are in terms of variables succeeding in the lexicographical order.}
\label{fig:zsame}
\end{figure}

Because of the shape of the row reduced echelon form, and the fact that we only subtract rows above from rows below, additional terms are only dependent on variables succeeding in the lexicographical order. So the complex Jacobian determinant of the transition map is upper triangular.

We can compute the diagonal entries. When $|J'| > |J|$, they are
\begin{eqnarray*}
    \frac{\partial z_{k\ell}'}{\partial z_{k\ell}} &=& \begin{cases} 1 & \text{ if } (k, \ell) \in \lambda_{J^\vee}, \ell \neq \star\\
    -t^{-1} &\text{ if } (k, \ell) \in \lambda_{J^\vee}, \ell = \star\end{cases}\\
    \frac{\partial z_{\star\star}'}{\partial t} &=& -t^{-2}.
\end{eqnarray*}

When $|J'| = |J|$, they are
\[\frac{\partial z_{k\ell}'}{\partial z_{k\ell}} = \begin{cases} 1 & \text{ if } k \neq \star, \ell \neq \star, \star + j_\star^\vee\\
    t^{-1} &\text{ if } k \neq \star, \ell = \star \text { or } k = \star, \ell \neq \star, \star + j_{\star}^\vee\\
    -t^{-1}&\text{ if } k \neq \star, \ell = \star + j_\star^\vee\\
    t^{-2} &\text{ if } k = \ell = \star
    \end{cases}\]
for $(k, \ell) \in \lambda_{J^\vee}$ and
\[ \frac{\partial z_{\star, \star + j_\star^\vee}'}{\partial t} = -t^{-2}.\]

Finally, since the new variable $z_{\star, \star + j_\star^\vee}'= t^{-1}$ its position in the lexicographic order does not affect the upper triangularity of the complex Jacobian matrix, but contributes an overall sign of $(-1)^{\sum_{k = \star + 1}^{|J|} j_k^\vee}$ to the complex determinant.

So the complex Jacobian determinant of the transition map is equal to
\begin{equation}\label{eq:Jacobian} \begin{cases} (-1)^{\star} \cdot t^{-\star-1} & \text{ if }  |J'| > |J| \\ (-1)^{\star + \sum_{k = \star + 1}^{|J'|} j_k^\vee} \cdot t^{-2\star - j_{\star}^\vee -1} & \text{ if } |J'| = |J|. \end{cases}
\end{equation}
We will denote this function as $T_{\lambda}^{\lambda'}(z^\lambda, t) = T_{\lambda}^{\lambda'}(t)$.

\begin{example}[$|J'| > |J|$ case]
Suppose $I = \{3\}$, $J = \{1, 2\}$, $I' = \phi$, $J' = \{1, 2, 3\}$.
Then $\Gamma_t$ is the complex row space of some
\[ \left[
\begin{array}{ccc|ccc}
-z_{13} & -z_{23} & 1 & t & 0 & 0\\ z_{12} & z_{22}& 0 & z_{23} & 1 & 0\\
z_{11} & z_{12} & 0 & z_{13} & 0 & 1
\end{array}\right].\]
This matrix row reduces to
\[ \left[
\begin{array}{ccc|ccc}
-t^{-1} z_{13} & - t^{-1} z_{23} & t^{-1} & 1 & 0 & 0\\ z_{12} + t^{-1}  z_{13} z_{23} & z_{22} + t^{-1} z_{23}^2 & -t^{-1}z_{23}& 0 & 1 & 0\\
z_{11} + t^{-1}  z_{13}^2 & z_{12} + t^{-1}  z_{13} z_{23} & -t^{-1} 
 z_{13}& 0 & 0 & 1
\end{array}\right].\]

The complex Jacobian matrix of $z^{\lambda'} \circ \varphi_{\lambda} ^{\lambda'} \circ ((z^{\lambda})^{-1} \times \Id_{\C^\times})$ is
\[ \frac{\partial(z_{11}', z_{12}', z_{13}', z_{22}', z_{23}', z_{33}')}{\partial(z_{11}, z_{12}, z_{13}, z_{22}, z_{23}, t)} = \begin{pmatrix} 1 & 0 & 2t^{-1}  z_{13} & 0 & 0 & -t^{-2}  z_{13}^2\\
0 & 1 & t^{-1}  z_{23} & 0 & t^{-1}  z_{13} & -t^{-2} z_{13}z_{23}\\
0 & 0 & -t^{-1} & 0 & 0 & t^{-2}  z_{13} \\
0 & 0 & 0 & 1 & 2t^{-1}  z_{23} & -t^{-2}  z_{23}^2\\
0 & 0 & 0 & 0 & -t^{-1} & t^{-2}  z_{23}\\
0 & 0 & 0 & 0 & 0 & -t^{-2} \end{pmatrix}.\]
This matrix has complex determinant $(-1)^3 \cdot t^{-4}$. The diagonal entries can be schematically represented as \ref{fig:DiagonalEntries}. 

\begin{figure}
\centering
\includegraphics[width = 0.2\linewidth]{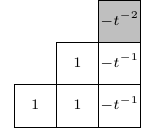} \quad \includegraphics[width = 0.3\linewidth]{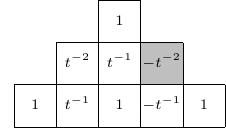}
\caption{Diagonal entries of Jacobians of some transition maps}
\label{fig:DiagonalEntries}
\end{figure}
\end{example}

\begin{example}[$|J'| = |J|$ case]
Suppose $I = \{2, 3\}$, $J = \{1, 4, 5\}$, $I' = \{2, 4\}$, and $J' = \{1, 3, 5\}$. Then $\Gamma_t$ is the complex row space of some
\[ \left[
\begin{array}{ccccc|ccccc}
-z_{15} & 1 & 0 & 0 & 0 & 0 & 0 & 0 & 0& 0\\
-z_{14} & 0 & 1 & -t & 0 & 0 & 0 & 0 & 0 & 0\\
z_{13} & 0 & 0 & z_{23} & z_{33} & 1 & 0 & 0 & 0 & 0 \\
z_{12} & 0 & 0 & z_{22} & z_{23} & 0 & 1 & t & 0 & 0 \\
z_{11} & 0 & 0 & z_{12} & z_{13} & 0 & 0 & z_{14} & z_{15} & 1
\end{array}\right].\]
After row reduction, the transition map $z^{\lambda'} \circ \varphi_{\lambda} ^{\lambda'} \circ ((z^{\lambda})^{-1} \times \Id_{\C^\times})$ is the following
\begin{eqnarray*}
    z_{11}' &=& z_{11} - 2t^{-1}  z_{12} z_{14}  + t^{-2}  z_{22} z_{14}^2\\
    z_{12}' &=& t^{-1}  z_{12} - t^{-2}  z_{22} z_{14}\\
    z_{13}' &=& z_{13} - t^{-1} z_{23} z_{14}\\
    z_{14}' &=& -t^{-1}  z_{14}\\
    z_{15}' &=& z_{15}\\
    z_{22}' &=& t^{-2}  z_{22}\\
    z_{23}' &=& t^{-1}  z_{23}\\
    z_{33}' &=& z_{33}\\
    z_{24}' &=& t^{-1}
    \end{eqnarray*}
The complex Jacobian matrix is upper triangular in the lexicographic ordering, and the diagonal entries can be represented schematically in Figure \ref{fig:DiagonalEntries}. The product is $(-1)^{2+1} \cdot t^{-7}$.
\end{example}

\begin{remark}[Arches, columns, and roofs]\label{rmk:ACR}
When $\lambda' \setminus \lambda = \{ (\star, \star + (j'_\star)^{\vee}-1)\}$, let
\begin{eqnarray*}
    \alpha_{\star} &:=& \{(k, \star), (k, \star + (j_\star')^\vee-1): 1 \le k \le \star\} \cup \{ (\star, \ell) : \star \le \ell \le \star + (j_\star')^\vee-1\}.\\
    c_\star &:=& \{ (k, \star + (j_\star')^\vee-1): 1 \le k \le \star\}.\\
    \rho_\star &:=& \{ (k, \ell) \in \lambda : k > \star\}.
\end{eqnarray*}
In this notation, the complex Jacobian determinants of equation \eqref{eq:Jacobian} are
\begin{equation} \label{eq:JacobianACR} T_{\lambda}^{\lambda'}(t) = \begin{cases} (-1)^{|c_\star|} \cdot t^{-|c_\star|-1} & \text{ if }  |J'| > |J| \\ (-1)^{|c_\star| + |\rho_\star|} \cdot t^{-|\alpha_\star| - 2} & \text{ if } |J'| = |J| .\end{cases}
\end{equation}
\end{remark}

\begin{figure}
\centering
\includegraphics{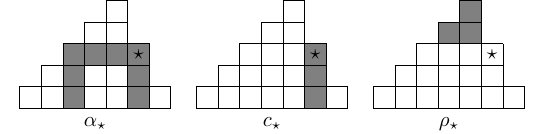}
\caption{$\alpha_\star$, $c_\star$, and $\rho_\star$ are shaded for $\lambda_{\{ 7, 5, 3, 2, 1\}} \le \lambda_{\{ 7, 5, 4, 2, 1\}}$}
\label{fig:ACR}
\end{figure}

\begin{proposition}[\label{thm:frontier}Frontier condition]
The following are equivalent
\begin{enumerate}
    \item $C_\lambda \cap \overline{C_{\lambda'}} \neq \phi$.
    \item $C_\lambda \subseteq \overline{C_{\lambda'}}$.
    \item $\lambda \le \lambda'$.
\end{enumerate}
\end{proposition}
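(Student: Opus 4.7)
My plan is to prove the three-way equivalence by establishing $(2) \Rightarrow (1) \Rightarrow (3) \Rightarrow (2)$. The implication $(2) \Rightarrow (1)$ is immediate since every $C_\lambda$ is nonempty, being homeomorphic to $\C^{|\lambda|}$ via $z^\lambda$.

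For $(3) \Rightarrow (2)$, I would first handle the case of covering pairs $\lambda \le \lambda'$ with $|\lambda'| = |\lambda| + 1$ using the local homeomorphisms $\varphi_\lambda^{\lambda'}(\Gamma, t) = \Gamma_t$ constructed earlier in the section. For any $\Gamma \in C_\lambda$, the curve $t \mapsto \Gamma_t$ lies in $C_{\lambda'}$ for $t \in \C^\times$ and has $\Gamma = \Gamma_0$ as its $t \to 0$ limit, so $C_\lambda \subseteq \overline{C_{\lambda'}}$ in this case. The general case follows by induction on $|\lambda'| - |\lambda|$: any $\lambda \le \lambda'$ admits a chain $\lambda = \lambda_0 \le \lambda_1 \le \cdots \le \lambda_k = \lambda'$ through covering relations, because the two elementary moves used in the construction of $\varphi$ (appending a length-one row at the bottom, or extending one existing row by a single box on the right) exhaust all the covers in this graded poset. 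Iterating $\overline{C_{\lambda_i}} \subseteq \overline{C_{\lambda_{i+1}}}$ (which holds because $\overline{C_{\lambda_{i+1}}}$ is closed and contains $C_{\lambda_i}$) then gives $C_\lambda \subseteq \overline{C_\lambda} \subseteq \overline{C_{\lambda'}}$.

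For $(1) \Rightarrow (3)$, I would use the upper semi-continuous intersection dimension function
\[ e_k(\Gamma) := \dim_\C\bigl(\Gamma \cap \span_\C\{\ee_1, \ldots, \ee_k\}\bigr), \qquad 1 \le k \le n. \]
By the Maslov cycles remark, $e_k$ restricts to the constant value $|I \cap [k]|$ on $C_{\lambda_{J^\vee}}$. Given $\Gamma \in C_\lambda \cap \overline{C_{\lambda'}}$, choose a sequence $\Gamma_m \to \Gamma$ with $\Gamma_m \in C_{\lambda'}$; upper semi-continuity forces $|I \cap [k]| \ge |I' \cap [k]|$ for every $1 \le k \le n$, equivalently $|J \cap [k]| \le |J' \cap [k]|$. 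Setting $k = n$ yields $|J| \le |J'|$, while setting $k = j_b$ for each $1 \le b \le |J|$ shows $J'$ has at least $b$ elements in $[j_b]$, hence $j_b' \le j_b$, and therefore $j_b^\vee = n - j_b + 1 \le n - j_b' + 1 = (j_b')^\vee$. This is precisely the defining condition for $\lambda \le \lambda'$.

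The main obstacle is the direction $(1) \Rightarrow (3)$, since one must extract purely combinatorial inequalities between $J$ and $J'$ from the existence of a single limit point. The upper semi-continuity argument combined with the coordinate-flag characterization of cells from the Maslov cycles remark handles this cleanly; the remaining work is careful bookkeeping between $J$ and its complementary/dualized indices to match the definition of $\le$ on shifted Young diagrams given just before the proposition.
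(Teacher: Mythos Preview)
Your argument is correct. The implications $(2)\Rightarrow(1)$ and $(3)\Rightarrow(2)$ follow the paper's proof essentially verbatim: both build a chain $\lambda=\lambda_1\le\cdots\le\lambda_\ell=\lambda'$ through covering relations and use the maps $\varphi_{\lambda_k}^{\lambda_{k+1}}$ to exhibit each $\Gamma\in C_\lambda$ as a limit of points in $C_{\lambda'}$.

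For $(1)\Rightarrow(3)$ you take a different route. The paper argues by contrapositive with a single coordinate: if $\lambda\nleq\lambda'$ there is a $b$ with $j_b^\vee>(j_b')^\vee$, and then the pivot entry $p_{b\,j_b}$ in the reduced row echelon form is identically $1$ on $C_\lambda$ and identically $0$ on $C_{\lambda'}$, so $C_\lambda\cap\overline{C_{\lambda'}}=\phi$. Your approach instead invokes upper semi-continuity of the intersection dimensions $e_k(\Gamma)=\dim_\C(\Gamma\cap\span_\C\{\ee_1,\dots,\ee_k\})$, which by the Maslov-cycles remark take the constant value $|I\cap[k]|$ on each cell, and then extracts the inequalities $|J\cap[k]|\le|J'\cap[k]|$ for all $k$. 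The paper's version is more self-contained (it never leaves the explicit matrix coordinates already set up) and pinpoints a single separating function; yours is more geometric and makes transparent that the closure order is exactly the flag-incidence order, at the cost of importing the semi-continuity fact. Both are short and either would serve.
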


\begin{proof}
$(3) \implies (2)$: There exists an increasing sequence $\lambda_1 \le \lambda_2 \le \cdots \lambda_\ell$ such that $\lambda_1 = \lambda$, $\lambda_\ell = \lambda'$, and $|\lambda_{k+1}| = |\lambda_k|+1$ for $1 \le k < \ell = |\lambda'| - |\lambda|$, and a sequence of embeddings $\{\varphi_{\lambda_k}^{\lambda_{k+1}}\}_{k = 1}^\ell$. For each $\Gamma \in C_\lambda$, let
\[ \Gamma_{t_1, \cdots, t_\ell} := \varphi_{\lambda_{\ell - 1}}^{\lambda_\ell}(\cdots(\varphi_{\lambda_2}^{\lambda_3} (\varphi_{\lambda_1}^{\lambda_2} (\Gamma, t_1), t_2), \cdots), t_\ell) \quad t_1, \cdots,  t_\ell \in \C\]
where if $t_j = 0$ for some $j \in \{1, \cdots, \ell\}$, then $t_k = 0$ for all $k \ge j$.
If $t_1, \cdots, t_\ell$ are all nonzero, then $\Gamma_{t_1, \cdots, t_\ell} \in C_{\lambda'}$ and $\Gamma_{t_1, \cdots, t_\ell} \to \Gamma$ as $(t_1, \cdots, t_\ell) \to 0$.
$(2) \implies (1)$ is immediate, as $C_\lambda$ is nonempty. $(1) \implies (3)$: Suppose $\lambda = \lambda_{J^\vee}$ and $\lambda' = \lambda_{(J')^\vee}$. If $\lambda \nleq \lambda'$, then there exists a $1 \le b \le |J|$ such that $j_b^\vee > (j_b')^\vee$. Then $p_{b j_b} (\Gamma') = 0$ for all $\Gamma' \in C_{\lambda'}$ and $p_{b j_b}(\Gamma) = 1$ for all $\Gamma \in C_{\lambda}$. So $C_\lambda \cap \overline{C_{\lambda'}} = \phi$.\end{proof}

We construct the attaching maps following the proof of Theorem 3.2.3 of Tajakka's thesis \cite{Tajakka} (cf Proposition 1.17 of Hatcher's book \cite{Hatcher}) for ordinary Grassmannians. To do this we set up some notation.

Let $\mathfrak{J}$ be the compatible linear complex structure on $\R^{2n}$ given by the linear extension of $\ee_i \mapsto \ff_i$, $\ff_i \mapsto -\ee_i$ for $1 \le i \le n$. Denote again by $\mathfrak{J}$ its $\C$-linear extension to $\C^{2n}$. Take the hermitian inner product $\langle \vv, \ww\rangle:= \omega^\C(\vv, \mathfrak{J}\overline{\ww})$ of $\C^{2n}$. Let $\Sp(n)$ be the group of complex linear transformations on $\C^{2n}$ that preserves this hermitian inner product and $\omega^\C$. Given a unitary basis $\{\uu_1, \cdots, \uu_n\}$ of a complex Lagrangian subspace $\Gamma$, $\{\mathfrak{J} \overline{\uu}_1, \cdots, \mathfrak{J}\overline{\uu}_n\}$ is a unitary basis of the complex Lagrangian subspace $\mathfrak{J}\overline{\Gamma}$. $\Gamma$ and $\mathfrak{J}\overline{\Gamma}$ are always orthogonal, so $\{ \uu_1, \cdots, \uu_n, \mathfrak{J}\overline{\uu}_1, \cdots, \mathfrak{J} \overline{\uu}_n\}$ is a unitary Darboux basis of $\C^{2n}$. Sending $\{\ee_1, \cdots, \ee_n, \ff_1, \cdots, \ff_n\}$ to this basis determines a unique element of $\Sp(n)$.

Let $\mathcal{F}_n$ be the set of $n$-tuples $\mathfrak{u}:=(\uu_1, \cdots, \uu_n)$ such that $\{\uu_1, \cdots, \uu_n\}$ is a unitary basis of some complex Lagrangian subspace of $\C^{2n}$. Taking each such $n$-tuple to the spanning space of its elements gives a principal $U(n)$-bundle $\pi: \mathcal{F}_n \to \Lag^\C(\R^{2n})$.

For each shifted Young diagram $\lambda = \lambda_{J^\vee}$, let $E_\lambda$ be the set of $(\uu_1, \cdots, \uu_n)$ in $\mathcal{F}_n$ such that
\[ \uu_k \in \begin{cases} \span_\C\{ \ee_1, \cdots, \ee_{i_k}\} & \text{ if } 1 \le k \le |I|\\ \span_\C\{ \ee_1, \cdots, \ee_n, \ff_n, \cdots, \ff_{j_{n+1-k}}\} & \text{ if } |I| < k \le n. \end{cases}\] Let $E(\lambda) \subseteq E_\lambda$ be the elements such that all pivot coefficients (i.e. $q_{i_k}(\uu_k)$ for $1 \le k \le |I|$ and $p_{j_{n+1-k}}(\uu_{k})$ for $|I| < k \le n$) are nonnegative real valued. If $\lambda \le \lambda'$ then $E_\lambda \subseteq E_{\lambda'}$ and $E(\lambda) \subseteq E(\lambda')$.

We can apply the Gram-Schmidt algorithm on the row vectors of the reduced row echelon forms of $\Gamma$'s in $C_{\lambda}$ such that pivot coefficients take values in nonnegative real numbers. Then we obtain unique \emph{unitary row echelon forms} corresponding to each $\Gamma \in C_\lambda$. We obtain continuous sections of $\pi: \pi^{-1} (\overline{C}_{\lambda}) \to \overline{C}_\lambda$.

We will refer to the set $\{ (\xi_1, \cdots, \xi_{k}) \in \C^{k} : \sum_{\ell = 1}^k |\xi_\ell|^2 = 1, \xi_k \in [0, 1]\}$ as the \emph{closed upper hemisphere of $S^{2k-1}$}.

\begin{proposition}[Construction of attaching maps] \label{prop:attachingmaps}
$E(\lambda)$ is homeomorphic to a closed $2|\lambda|$-ball. Consequently, the precomposition of this homeomorphism with the restriction of  $\pi$ to $E(\lambda) \setminus \pi^{-1}(C_\lambda)$ is the attaching map of $C_\lambda$ in the CW decomposition of $\Lag^\C(\R^{2n})$.
\end{proposition}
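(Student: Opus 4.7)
The plan is to realize $E(\lambda)$ as an iterated fiber bundle of closed upper hemispheres and then identify the resulting characteristic map with the attaching map of $C_\lambda$. For $0 \le k \le n$ let $E^{(k)}(\lambda)$ denote the space of partial frames $(\uu_1, \ldots, \uu_k)$ satisfying the first $k$ conditions defining $E(\lambda)$: each $\uu_j$ lies in the subspace $V_j \subseteq \C^{2n}$ prescribed by $\lambda$ (i.e.\ $\span_\C\{\ee_1, \ldots, \ee_{i_j}\}$ if $j \le |I|$, and $\span_\C\{\ee_1, \ldots, \ee_n, \ff_n, \ldots, \ff_{j_{n+1-j}}\}$ if $j > |I|$), the $\uu_j$'s are Hermitian-orthonormal with complex-isotropic span, and the pivot coordinate of each $\uu_j$ is nonnegative real. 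Then $E^{(0)}(\lambda)$ is a point, $E^{(n)}(\lambda) = E(\lambda)$, and forgetting the last vector gives $p_k \colon E^{(k)}(\lambda) \to E^{(k-1)}(\lambda)$. The identity $\langle \uu_k, \mathfrak{J}\overline{\uu_j}\rangle = -\omega^\C(\uu_k, \uu_j)$ merges the Hermitian and Lagrangian orthogonality conditions on $\uu_k$ into Hermitian orthogonality against $W_{k-1} := \span_\C\{\uu_j, \mathfrak{J}\overline{\uu_j}\}_{j < k}$. Hence the fiber of $p_k$ is $\{\uu_k \in V_k \cap W_{k-1}^\perp : \|\uu_k\| = 1,\ \text{pivot} \in \R_{\ge 0}\}$, a closed upper hemisphere of $S^{2m_k - 1}$ with $m_k := \dim_\C(V_k \cap W_{k-1}^\perp)$, and hence a closed $(2m_k - 2)$-ball.

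I expect the main obstacle to be verifying that $m_k$ is constant over $E^{(k-1)}(\lambda)$, so that $p_k$ is a genuine fiber bundle, together with the arithmetic $\sum_k (2m_k - 2) = 2|\lambda|$. The constancy I would handle by case analysis on the position of $k$: when $k \le |I|$, every $\mathfrak{J}\overline{\uu_j}$ with $j < k$ lies in $\span_\C\{\ff_1, \ldots, \ff_n\}$ and is automatically orthogonal to $V_k \subseteq \span_\C\{\ee_1, \ldots, \ee_n\}$, yielding the uniform $m_k = i_k - k + 1$; when $k > |I|$, the orthogonal projection of $\mathfrak{J}\overline{\uu_j}$ onto $V_k = \span_\C\{\ee_1, \ldots, \ee_n, \ff_n, \ldots, \ff_{j_b}\}$ (with $b = n+1-k$) is controlled by whether $i_j \ge j_b$ in the case $j \le |I|$, or by the overlap between the $\ff$-support of $V_j$ and that of $V_k$ in the case $j > |I|$, each of which depends only on the shape of $\lambda$. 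The total $\sum_k (2m_k - 2) = 2|\lambda|$ then follows by matching the fiber dimensions against the parameter count $|\lambda| = \sum_b j_b^\vee$ of the reduced row echelon form. With constant fiber dimension established, $p_k$ is a trivializable fiber bundle of closed balls over the inductively contractible base $E^{(k-1)}(\lambda)$, so $E^{(k)}(\lambda) \cong E^{(k-1)}(\lambda) \times D^{2m_k - 2}$ is again a closed ball; iterating from $E^{(0)}(\lambda) = \{\mathrm{pt}\}$ produces the homeomorphism $E(\lambda) \cong D^{2|\lambda|}$.

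For the attaching-map assertion, the restriction of $\pi$ to the interior $E(\lambda)^\circ$ (all pivots strictly positive) is a homeomorphism onto $C_\lambda$: its inverse applies the Gram-Schmidt algorithm to the rows of the reduced row echelon form of $\Gamma \in C_\lambda$ and rotates each resulting row by a unit phase to make its pivot positive real. A point of $\partial E(\lambda)$ has at least one pivot equal to zero, which forces the associated $\uu_k$ into a strictly smaller coordinate subspace and, via Proposition \ref{thm:frontier}, pushes the spanned Lagrangian into $C_{\lambda'}$ for some $\lambda' < \lambda$. Hence $\pi(\partial E(\lambda)) \subseteq \bigsqcup_{\lambda' < \lambda} C_{\lambda'}$, and compactness of $E(\lambda)$ together with continuity of $\pi$ identifies the composition $\partial D^{2|\lambda|} \cong \partial E(\lambda) \to \bigsqcup_{\lambda' < \lambda} C_{\lambda'}$ as the attaching map of $C_\lambda$ in the CW decomposition of $\Lag^\C(\R^{2n})$.
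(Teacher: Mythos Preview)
Your inductive scheme has a genuine gap: the fiber dimension $m_k = \dim_\C(V_k \cap W_{k-1}^\perp)$ is \emph{not} constant on $E^{(k-1)}(\lambda)$. The jump occurs exactly at boundary points where an earlier pivot vanishes. Concretely, take $n=3$, $I=\{3\}$, $J=\{1,2\}$ and $k=2$, so $V_2=\span_\C\{\ee_1,\ee_2,\ee_3,\ff_3,\ff_2\}$. Over the interior point $\uu_1=\ee_3$ one has $W_1=\span_\C\{\ee_3,\ff_3\}\subseteq V_2$, hence $m_2=5-2=3$. But $\uu_1=\ee_1$ is also in $E^{(1)}(\lambda)$ (the pivot $q_3(\uu_1)=0$ is allowed), and there $\mathfrak{J}\overline{\uu_1}=\ff_1\in V_2^\perp$, so only the Hermitian constraint survives and $m_2=5-1=4$. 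Thus $p_2$ is not a fiber bundle and the step ``$E^{(k)}(\lambda)\cong E^{(k-1)}(\lambda)\times D^{2m_k-2}$'' fails. Your heuristic that the projection of $\mathfrak{J}\overline{\uu_j}$ onto $V_k$ ``depends only on the shape of $\lambda$'' is correct only on the open stratum where all pivots are positive; on the closed ball $E^{(k-1)}(\lambda)$ it depends on the actual frame.

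The paper circumvents this by peeling off the frame from the other end and, crucially, not using the naive forgetful map. It sends $\mathfrak u=(\uu_1,\ldots,\uu_n)$ to the pair $(\uu_n,\hat{\mathfrak u})$, where $\hat{\mathfrak u}$ is obtained by first subtracting the $\ee_{j_1}$--component $\omega^\C(\uu_k,\ff_{j_1})\ee_{j_1}$ from each $\uu_k$ and then reapplying Gram--Schmidt, so that $\hat{\mathfrak u}\in\pr^{-1}(\ff_{j_1})\cong E(\lambda_{\{j_2^\vee,\ldots,j_{|J|}^\vee\}})\subseteq\mathcal F_{n-1}$. This normalization, implemented by an element $S_{\mathfrak u}\in\Sp(n)$ depending continuously on $\mathfrak u$, is exactly what forces the remaining constraints on $\uu_n$ to have constant rank, so that the fiber over every $\hat{\mathfrak u}$ is a closed hemisphere of fixed dimension $2j_1^\vee$. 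Without a device of this kind your induction does not close up.
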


\begin{proof} Suppose $\lambda = \lambda_{J^\vee}$ for some $J \subseteq [n]$. We will prove by induction on $|J|$. If $J = \{k\}$, $E(\lambda_{\{k\}})$ consists of $(\ee_1, \cdots, \ee_{k-1}, \ee_{k+1}, \cdots, \ee_{n-1}, \uu_n)$, where $\uu_n$ takes values in the closed upper hemisphere of
\[ S^{2k} \subseteq \span_\C\{\ee_k, \ff_n, \ff_{n-1}, \cdots, \ff_{n-k+2}\} \oplus \span_\R\{\ff_{n-k+1}\}.\] So
$E(\lambda_{\{k\}})$ is homeomorphic to a closed $2|\lambda|$-ball.

If $|J| > 1$, let $\pr((\uu_1, \cdots, \uu_n)) := \uu_n$. Then $\mathfrak{u} \in \pr^{-1}(\ff_{j_1})$ can be represented by $n \times 2n$ matrix with $k$th row equal to $[\uu_k]_{\{\ee, \ff\}}$ has only zeros on the $j_1$st and $2n-j_1 + 1$st columns and zeros on the $n$th row except at the `pivot' location $\ff_{j_1}$. So $\pr^{-1}(\ff_{j_1})$ is homeomorphic to $E(\lambda_{\{ j_2^\vee, \cdots, j_{n}^\vee\}}) \subseteq \mathcal{F}_{n-1}$.

For $\mathfrak{u} = (\uu_1, \cdots, \uu_n) \in E(\lambda)$, obtain $(\hat{\uu}_1, \cdots, \hat{\uu}_{n-1})$ by applying Gram-Schmidt to $\{ \uu_k - \omega^\C(\uu_k, \ff_{j_1}) \ee_{j_1}\}_{1 \le k \le n-1}$, and let $\hat{\uu}_n := \ff_{j_1}$. Then $\{\hat{\uu}_1, \cdots, \hat{\uu}_n\}$ is a unitary basis of a complex Lagrangian subspace. Let $S_{\mathfrak{u}}$ be the $\C$-linear extension of $\uu_k \mapsto \hat{\uu}_k$, $\mathfrak{J} \overline{\uu}_k \mapsto \mathfrak{J} \overline{\hat{\uu}}_k$  for $1 \le k \le n$. Then $S_\mathfrak{u} \in \Sp(n)$.

If $i_k < j_1$, then $\omega^\C(\uu_k, \ff_{j_1}) = 0$, so sending $\mathfrak{u}$ to $\hat{\mathfrak{u}}$ is equivalent to multiplying the $n \times 2n$ matrix with $k$th row as $[\uu_k]_{\{\ee, \ff\}}$, on the left by a lower triangular $n \times n$ matrix. So $(\hat{\uu}_1, \cdots, \hat{\uu}_n) \in \pr^{-1}(\ff_{j_1}) \subseteq E(\lambda)$. Assigning $S_\mathfrak{u}$ to $\mathfrak{u}$ also depends continuously on $\mathfrak{u}$. Let $E(\lambda)'$ be the image $(\pr, S_\cdot)(E(\lambda)) \subseteq S^{2(2n - j_1 +1) -1} \times \pr^{-1}(\ff_{j_1})$. Consider the projection onto the second factor. The fiber above $(\hat{\uu}_1, \cdots, \hat{\uu}_{n-1}, \ff_{j_1})$ consists of all $\uu_n$ in the intersection of the closed upper hemisphere of $S^{2(2n-j_1 +1)-1}$ with the orthogonal complement of
\[ \span_{\R} \{ \uu_1, \mathfrak{J} \overline{\uu}_1, \cdots, \uu_{n-1}, \mathfrak{J} \overline{\uu}_{n-1}, \ff_{1}, i \ff_{1} \cdots, \ff_{j_1-1}, i \ff_{j_1-1}, i \ff_{j_1}\}\]
which is a closed hemisphere of real dimension $(4n - (2(n-1) + 2j_1 + 1))-1 = 2 j_1$.
So $E(\lambda) \cong E(\lambda)' \to \pr^{-1}(\ff_{j_1}) \cong E(\lambda_{\{ j_2^\vee, \cdots, j_{|J|}^\vee\}})$ is a trivial bundle with fibers homeomorphic to the closed $2j_1$-ball. By inductive hypothesis, the base is homeomorphic to a closed $2|\lambda| - 2j_1$-ball. The total space $E(\lambda)$ is thus homeomorphic to a closed $2|\lambda|$-ball.
\end{proof}

\begin{remark}[Integral homology of real and complex Lagrangian Grassmannians]

From Proposition \ref{prop:attachingmaps},
\[ \Lag^\C(\R^{2n}) = \bigsqcup_{\lambda} C_\lambda\]
is a CW decomposition. Since all Schubert cells are even dimensional, all attaching maps have degree $0$, and the integral homology groups can computed by counting how many shifted Young diagrams of a particular size are allowed:
\begin{eqnarray*}
H_{2k} (\Lag^\C(\R^{2n}); \Z) &\cong& \Z^{|\{ \lambda : |\lambda| = k\}|}\\
H_{2k + 1} (\Lag^\C(\R^{2n}); \Z) &\cong& \{0\}.
\end{eqnarray*}

Equations \eqref{eq:Jacobian} and Propositions \ref{thm:frontier}, \ref{prop:attachingmaps}  also hold when all parameters are real numbers.
\[ \Lag^\R(\R^{2n}) = \bigsqcup_{\lambda} C_\lambda^\R\]
is also a CW decomposition, and the degree of the attaching map $g_{\lambda}^{\lambda'}: \partial C_{\lambda'}^\R \to C_{\lambda}^\R$ can be computed. By Proposition \ref{thm:frontier} $\deg g_{\lambda}^{\lambda'} = 0$ if $\lambda \nleq \lambda'$, and for dimensional reasons $\deg g_{\lambda}^{\lambda'} = 0$ if $\lambda \le \lambda'$ and $|\lambda'| > |\lambda| + 1$. When $\lambda_{J^\vee} \le \lambda_{(J')^\vee}$ and $|\lambda_{(J')^\vee}| = |\lambda_{J^\vee}| + 1$, then from equation \eqref{eq:Jacobian} we have

\begin{subnumcases}{\label{eq:bdyreal} \deg g_{\lambda_{J^\vee}}^{\lambda_{(J')^\vee}} =}(-1)^\star \cdot ( 1- (-1)^{\star +1}) & \text{ if } $|J'| > |J|$ \label{eq:bdyrealnew}\\
(-1)^{\star + \sum_{k = \star + 1}^{|J|} (j_k')^\vee} \cdot (1-(-1)^{(j_\star')^\vee}) & \text{ if } $|J'|= |J|$\label{eq:bdyrealsame} \end{subnumcases}
since $C_{\lambda_{J^\vee}}^\R \times \{t\}$ and $C_{\lambda_{J^\vee}}^\R \times \{-t\}$ have opposite orientations. This agrees with \cite{FuchsClassical} and \cite{Rabelo} up to sign. Integral homology can be computed algorithmically.
\end{remark}

\begin{example}
The eight Schubert cells of $\Lag^{\C}(\R^6)$ are shown in Table \ref{tab:LagCR6}.
\begin{table}
\ytableausetup{centertableaux, boxsize = 5pt} 
\begin{tabular}{c c c c c}\hline
\noalign{\smallskip}$J$ & $I$ & RREF & Diagram & $J^\vee$ \\ \hline
\noalign{\smallskip} $\{1, 2, 3\}$ & $\phi$ & $\left[
\begin{array}{ccc|ccc}
\ast & \ast &z_{33}& 1& 0 & 0\\ \ast &z_{22}& z_{23}& 0 & 1 & 0\\
z_{11} & z_{12} & z_{13} & 0 & 0 & 1
\end{array}\right]$ & \ydiagram{2 + 1, 1+2, 3} &$\{3, 2, 1\}$\\[15pt]

$\{1, 2\}$ & $\{3\}$ & $\left[
\begin{array}{ccc|ccc}
\ast & \ast & 1 & 0 & 0 & 0\\ \ast & z_{22}& 0 & z_{23} & 1 & 0\\
z_{11} & z_{12} & 0 & z_{13} & 0 & 1
\end{array}\right]$ & \ydiagram{1+2, 3} &$\{3, 2\}$\\[15pt]

$\{1, 3\}$ & $\{2\}$ & $\left[
\begin{array}{ccc|ccc}
\ast & 1 & 0 & 0& 0 & 0\\ \ast & 0 & z_{22}& 1 & 0 & 0\\
z_{11} & 0 & z_{12} & 0 & z_{13} & 1
\end{array}\right]$ & \ydiagram{1 + 1, 3} &$\{3, 1\}$\\[15pt]

$\{2, 3\}$ & $\{ 1\} $ & $\left[
\begin{array}{ccc|ccc}
1 & 0 & 0 & 0 & 0 & 0\\ 0 & \ast & z_{23}& 1 & 0 & 0\\
0 & z_{11} & z_{12} & 0 & 1 & 0
\end{array}\right]$ & \ydiagram{1 + 1, 2} &$\{2, 1\}$\\[15pt]

$\{1\}$ & $\{2, 3\} $ & $\left[
\begin{array}{ccc|ccc}
\ast & 1 & 0 & 0 & 0 & 0\\ \ast &0 & 1& 0 & 0 & 0\\
z_{11} & 0 & 0 & z_{12} & z_{13} & 1
\end{array}\right]$ & \ydiagram{3} &$\{3\}$\\[15pt]

$\{2\}$ & $\{1, 3\} $ & $\left[
\begin{array}{ccc|ccc}
1 & 0 & 0& 0 & 0 & 0\\ 0 & \ast & 1& 0 & 0 & 0\\
0 & z_{11} & 0 & z_{12} & 1 & 0
\end{array}\right]$ & \ydiagram{2} &$\{2\}$\\[15pt]

$\{3\}$ & $\{1, 2\}$ & $\left[
\begin{array}{ccc|ccc}
1 & 0 & 0 & 0& 0 & 0\\ 0  & 1 & 0 & 0 & 0 & 0\\
0 & 0 & z_{11} & 1 & 0 & 0
\end{array}\right]$ & \ydiagram{1} &$\{1\}$\\[15pt]

$\phi$ & $\{1, 2, 3\}$ & $\left[
\begin{array}{ccc|ccc}
1 & 0 & 0& 0 & 0 & 0 \\ 0 & 1 & 0 & 0 & 0 & 0 \\ 0 & 0 & 1 & 0 & 0 & 0
\end{array}\right]$ & $\phi$ &$\phi$\\[15pt]

\hline
\end{tabular}
\label{tab:LagCR6}
\caption{The eight complex Schubert cells of $\Lag^{\C}(\R^6)$}
\end{table}The integral homology groups are then
\[ H_{k}(\Lag^\C(\R^6); \Z) \cong \begin{cases} \Z & \text{ if } k = 0, 2, 4, 8, 10, 12\\
\Z^2 & \text{ if } k = 6\\
0 & \text{ otherwise.} \end{cases}\]
For $\Lag^\R(\R^6)$ the CW structure is shown in Figure \ref{fig:LagRR6}.
\begin{figure}
\centering \includegraphics[width = 0.6\textwidth]{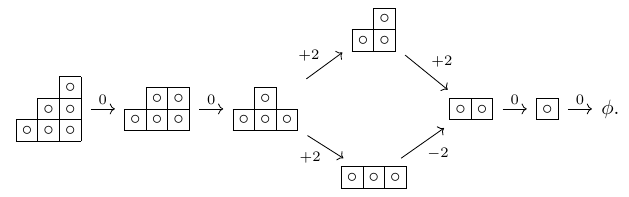}
\caption{A diagrammatic description of the CW structure of $\Lag^{\R}(\R^6)$ by real Schubert cells. Sign conventions of attaching maps differ from \protect\cite{Rabelo} Example 4.3.}
\label{fig:LagRR6}
\end{figure}
The integral homology groups are then
\[ H_{k}(\Lag^\R(\R^6); \Z) \cong \begin{cases} \Z & \text{ if } k = 0, 1, 5, 6\\
\Z_2 & \text{ if } k = 2, 3\\
0 & \text{ otherwise.} \end{cases}\]
\end{example}

\section{Schubert cells of mixed type}
\label{sec:mixed}

\subsection{Shifted Young diagrams of Schubert cells of mixed type}
\label{subsec:SYDmixed}

Identify $\C_{(k, \ell)}$ with $\R_{(k, \ell)} \times i \R_{(k, \ell)}$ with coordinates $z_{k\ell} = x_{k\ell} + i y_{k\ell}$. We give the lexicographical order on the Cartesian product $\lambda \times \{x, y\}$, from the lexicographical order on $\lambda$, and $x < y$. So $((k, \ell), t) \le ((k', \ell'), t')$ if either $(k, \ell) < (k', \ell')$ or both $(k, \ell) = (k', \ell')$ and $t \le t'$. Then we identify $x_{k\ell}$ with $((k, \ell), x)$ and $y_{k\ell}$ with $((k, \ell), y)$.

Let
\[ \mathring{\mu} := \bigcap_{\substack{\mu' \le \mu \\ |\mu'| = |\mu| - 1}} \mu'.\]
and
$C_{\lambda+i\mu}:= (z^{\lambda})^{-1} (U_{\lambda + i \mu})$ where
\[ U_{\lambda + i \mu}:= \prod_{(k, \ell) \in \mathring{\mu}} \C_{(k, \ell)} \times \prod_{(k, \ell) \in \mu \setminus \mathring{\mu}} (\R_{(k, \ell)}\times i \R_{(k, \ell)}^\times) \times \prod_{(k, \ell) \in \lambda \setminus \mu} \R_{(k, \ell)}.\]

Let $\mathcal{E}(\mu):= \{\varepsilon: \mu \setminus \mathring{\mu} \to \{+1, -1\}\}$, and let
\[ U_{\lambda + i \mu, \varepsilon}:= \prod_{(k, \ell) \in \mathring{\mu}} \C_{(k, \ell)} \times \prod_{(k, \ell) \in \mu \setminus \mathring{\mu}} (\R_{(k, \ell)}\times \varepsilon(k, \ell) i\R_{(k, \ell)}^{+}) \times \prod_{(k, \ell) \in \lambda \setminus \mu} \R_{(k, \ell)}\]
and $C_{\lambda +i \mu, \varepsilon} := (z^{\lambda})^{-1} (U_{\lambda + i \mu, \varepsilon})$. Then $C_{\lambda + i \mu, \varepsilon}$ are the connected components of $C_{\lambda + i \mu}$.

The shifted Young diagrams of mixed type $\lambda + i \mu$ can be decorated by adding labels on $\lambda \setminus \mathring{\mu}$. We will label copies of $\C$ with $\ytableaushort{\,}$, copies of $\R \times i \R^\times$ with $\ytableaushort{\times}$, copies of $\R$ with $\ytableaushort{\circ}$, copies of $\R \times i \R^+$ by $\ytableaushort{+}$, and copies of $\R \times -i \R^+$ by $\ytableaushort{-}$.

\begin{example}
\ytableausetup{baseline, boxsize = 7pt}
Suppose
\[\lambda = \begin{ytableau} \none & \none & \\ \none &&\\ && \end{ytableau}, \quad \mu = \begin{ytableau} \none & \none & 
\none \\ \none & & \none\\ && \end{ytableau}. \]
The shifted Young diagram of $\lambda + i \mu$ is denoted as
\[\begin{ytableau} \none & \none & \circ \\ \none & \times & \circ \\  && \times \end{ytableau}.\]
The shifted Young diagrams of the four connected components of $C_{\lambda + i \mu}$ are
\[\begin{ytableau} \none & \none & \circ \\ \none & + & \circ \\  && + \end{ytableau}, \quad \begin{ytableau} \none & \none & \circ \\ \none & + & \circ \\  && -\end{ytableau}, \quad \begin{ytableau} \none & \none & \circ \\ \none & - & \circ \\  && + \end{ytableau}, \quad \begin{ytableau} \none & \none & \circ \\ \none & - & \circ \\  && - \end{ytableau}. \]
\end{example}

\subsection{Attaching maps of Schubert cells of mixed type}
\label{subsec:attachmixed}

\begin{lemma}\label{lem:specialsquare}
  If $\mu \nleq \mu'$, then there exists an element in $\mu \setminus \mathring{\mu}$ not in $\mu'$.
\end{lemma}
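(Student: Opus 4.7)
The plan is to first reinterpret the set $\mu \setminus \mathring{\mu}$ in purely combinatorial terms and then exhibit the desired box by a maximality argument.

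First, I would unpack $\mathring{\mu}$. A shifted Young diagram $\mu' \le \mu$ with $|\mu'| = |\mu|-1$ is obtained from $\mu$ by deleting a single box, and such deletions are allowed precisely at the removable corners of $\mu$, i.e., boxes $(k,\ell) \in \mu$ such that neither $(k,\ell+1)$ nor $(k+1,\ell)$ lies in $\mu$ (with the understanding that $(k+1,k)$ is never a valid position). Consequently $\mathring{\mu}$ is the intersection of all the $\mu \setminus \{(k,\ell)\}$ as $(k,\ell)$ ranges over removable corners, so
\[ \mu \setminus \mathring{\mu} = \{\text{removable corners of } \mu\}. \]
Thus the lemma reduces to: if $\mu \not\subseteq \mu'$, then some removable corner of $\mu$ fails to lie in $\mu'$.

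Next, since $\mu \nleq \mu'$ is equivalent (by the definition of $\le$ in Section 2.2) to $\mu \not\subseteq \mu'$, the set $\mu \setminus \mu'$ is nonempty. Choose $(k,\ell) \in \mu \setminus \mu'$ with $k+\ell$ maximal (breaking ties arbitrarily). I then claim $(k,\ell)$ is a removable corner of $\mu$. The argument uses the two structural properties of a shifted Young diagram, both of which apply to $\mu'$: within row $k$, the boxes present form a contiguous interval starting at column $k$; and within column $\ell$, the boxes present form a contiguous interval from row $1$ downward.

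Specifically, if $(k,\ell+1) \in \mu$ then by the maximality of $k+\ell$ we would have $(k,\ell+1) \in \mu'$, but row-contiguity in $\mu'$ would then force $(k,\ell) \in \mu'$, a contradiction. Likewise, if $(k+1,\ell) \in \mu$ (a position that can only be legal when $\ell \ge k+1$, so no issue with the diagonal), then maximality forces $(k+1,\ell) \in \mu'$, and column-contiguity in $\mu'$ gives $(k,\ell) \in \mu'$, again a contradiction. So neither $(k,\ell+1)$ nor $(k+1,\ell)$ lies in $\mu$, which is exactly the condition for $(k,\ell)$ to be a removable corner, hence to lie in $\mu \setminus \mathring{\mu}$.

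The only subtlety — and the step I would check most carefully — is the shifted-case boundary behavior: making sure the two contiguity properties of $\mu'$ really do hold in the correct directions for every legal position, and verifying that the diagonal case $\ell = k$ (where $(k+1,\ell)$ is not a valid shifted position at all) does not create an exception. Once these are cleanly handled, the maximality argument gives the removable corner in $\mu \setminus \mu' \subseteq (\mu \setminus \mathring{\mu}) \setminus \mu'$ that the lemma demands.
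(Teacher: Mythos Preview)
Your argument is correct. The identification of $\mu \setminus \mathring{\mu}$ with the removable corners of $\mu$ is right, and the maximality-of-$k+\ell$ trick cleanly produces a removable corner outside $\mu'$; the shifted-diagram boundary checks you flag (row contiguity, column contiguity via strictly decreasing row lengths, and the diagonal case $\ell=k$) all go through.

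The paper takes a different, more lattice-theoretic route. Rather than characterizing $\mu\setminus\mathring{\mu}$ explicitly, it observes that $\mu'\cap\mu$ is itself a shifted Young diagram strictly contained in $\mu$, then picks any $\mu''$ with $\mu'\cap\mu \le \mu'' < \mu$ and $|\mu''|=|\mu|-1$ (the last step of a saturated chain). The unique box of $\mu\setminus\mu''$ then lies in $\mu\setminus\mathring{\mu}$ by definition and avoids $\mu'$ because $\mu''\supseteq \mu'\cap\mu$. This is a two-line argument that never names ``removable corner'' and relies only on the poset being graded and closed under intersection; your version is slightly longer but more constructive, pinning down a specific box and making the removable-corner interpretation of $\mu\setminus\mathring{\mu}$ explicit, which is a useful observation in its own right.
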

\begin{proof}
    $\mu' \cap \mu < \mu$, so there exists a $\mu''$ with $|\mu''| = |\mu| - 1$ such that $\mu' \cap \mu \le \mu'' < \mu$. Then the unique square in $\mu \setminus \mu''$ is contained in $\mu \setminus \mathring{\mu}$ but is not contained in $\mu'$.
\end{proof}

Suppose $\mu \neq \mu'$, and without loss of generality $\mu \nleq \mu'$. Then the square given by Lemma \ref{lem:specialsquare} has the coordinate $(k, \ell)$. Then $y_{k\ell}^{\lambda}(\Gamma)$ is nonzero for $\Gamma \in C_{\lambda + i \mu}$ and zero for $\Gamma \in C_{\lambda + i \mu'}$. So the $C_{\lambda + i \mu}$'s are disjoint. Moreover, $\C^{|\lambda|}$ is partitioned by how many of the last $k$ coordinates are real. So we have the partitions
\[ C_{\lambda} = \bigsqcup_{\mu: \mu \le \lambda} C_{\lambda +i \mu}, \quad \Lag^{\C}(\R^{2n}) = \bigsqcup_{\substack{\lambda, \mu\\ \mu \le \lambda}} C_{\lambda + i \mu}.\]

\begin{lemma}[\label{thm:frontiercellwisemixed}Frontier condition of $C_\lambda$]
The following are equivalent
\begin{enumerate}
    \item $C_{\lambda + i \mu} \cap \overline{C_{\lambda + i \mu'}} \cap C_{\lambda} \neq \phi$.
    \item $C_{\lambda + i \mu} \subseteq \overline{C_{\lambda + i \mu'}} \cap C_{\lambda}$.
    \item $\mu \le \mu'$.
\end{enumerate}
\end{lemma}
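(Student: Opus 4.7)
The plan is to work entirely in the coordinate chart $z^\lambda : C_\lambda \to \C^{|\lambda|}$ and reduce everything to an explicit factorwise description of the sets $U_{\lambda + i \mu}$ and their closures. Under $z^\lambda$, each $C_{\lambda + i \mu'}$ corresponds to the product $U_{\lambda + i \mu'}$ whose $(k,\ell)$-factor is $\C$ when $(k,\ell) \in \mathring{\mu'}$, is $\R \times i\R^\times$ when $(k,\ell) \in \mu' \setminus \mathring{\mu'}$, and is $\R$ when $(k,\ell) \in \lambda \setminus \mu'$. Because finite products commute with closure and $\overline{\R \times i\R^\times} = \C$, the closure of $U_{\lambda + i \mu'}$ in $\C^{|\lambda|}$ is simply $\prod_{(k,\ell) \in \mu'} \C_{(k,\ell)} \times \prod_{(k,\ell) \in \lambda \setminus \mu'} \R_{(k,\ell)}$. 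Since $C_{\lambda + i \mu'} \subseteq C_\lambda$ and $C_\lambda$ carries the subspace topology, $\overline{C_{\lambda + i \mu'}} \cap C_\lambda$ is the closure of $C_{\lambda + i \mu'}$ in $C_\lambda$, which under $z^\lambda$ is precisely the product set above.

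With this in hand, $(3) \Rightarrow (2)$ becomes immediate: if $\mu \le \mu'$ then $\lambda \setminus \mu' \subseteq \lambda \setminus \mu$, so every point of $C_{\lambda + i \mu}$ automatically has real coordinates on $\lambda \setminus \mu'$ and takes values in $\C$ on $\mu'$, hence lies in $\overline{C_{\lambda + i \mu'}} \cap C_\lambda$. An explicit approximating family, should one want it, is obtained by perturbing the $y$-coordinates at positions in $(\mu' \setminus \mathring{\mu'}) \setminus (\mu \setminus \mathring{\mu})$ by $\epsilon > 0$ and letting $\epsilon \to 0$. The implication $(2) \Rightarrow (1)$ is immediate since $C_{\lambda + i \mu}$ is nonempty.

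For $(1) \Rightarrow (3)$ I would argue by contrapositive. Assume $\mu \nleq \mu'$; then Lemma \ref{lem:specialsquare} produces a box $(k,\ell) \in \mu \setminus \mathring{\mu}$ with $(k,\ell) \notin \mu'$. For any $\Gamma \in C_{\lambda + i \mu}$, the coordinate $y_{k\ell}(z^\lambda(\Gamma))$ is nonzero by the definition of $U_{\lambda + i \mu}$, while for any $\Gamma \in \overline{C_{\lambda + i \mu'}} \cap C_\lambda$, the coordinate $y_{k\ell}(z^\lambda(\Gamma))$ vanishes because $(k,\ell) \in \lambda \setminus \mu'$. These conditions are incompatible, so the intersection is empty.

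The main obstacle, if there is one, is a small topological bookkeeping step: verifying that the ambient closure in $\Lag^\C(\R^{2n})$ restricted to $C_\lambda$ agrees with the factorwise closure in $\C^{|\lambda|}$. This relies only on the standard identification of subspace closures and the fact that finite products commute with closures, so no deeper geometry of the Grassmannian intervenes beyond the chart $z^\lambda$; the entire combinatorial content lives in Lemma \ref{lem:specialsquare}, which is what makes the equivalence $\mu \le \mu' \Leftrightarrow (\mu \setminus \mathring{\mu}) \subseteq \mu'$ available for pinpointing a separating coordinate.
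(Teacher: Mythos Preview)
Your proposal is correct and follows essentially the same route as the paper: both argue $(3)\Rightarrow(2)$ by the containment $U_{\lambda+i\mu}\subseteq\overline{U_{\lambda+i\mu'}}$ in the chart $z^\lambda$, note $(2)\Rightarrow(1)$ is trivial, and prove $(1)\Rightarrow(3)$ by invoking Lemma~\ref{lem:specialsquare} to find a coordinate $y_{k\ell}$ that is nonzero on $C_{\lambda+i\mu}$ but vanishes on $C_{\lambda+i\mu'}$ (and hence on its closure in $C_\lambda$). Your version simply spells out the factorwise closure and the subspace-closure bookkeeping more explicitly than the paper does.
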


\begin{proof}
    $(2) \implies (1)$ is immediate, as $C_{\lambda + i \mu}$ is nonempty. $(3)
    \implies (2)$: $U_{\lambda + i \mu} \subseteq \overline{U_{\lambda + i \mu'}}$, so $(z^{\lambda})^{-1} (U_{\lambda + i \mu}) \subseteq (z^{\lambda})^{-1}(\overline{U_{\lambda + i \mu'}})$.  $(1) \implies (3)$: If $\mu \nleq \mu'$, by Lemma \ref{lem:specialsquare}, there exists a $(k, \ell) \in \mu \setminus \mathring{\mu}$ not in $\mu'$. Then $y_{k\ell}^{\lambda}(\Gamma_k) = 0$ for any sequence $\Gamma_k \in C_{\lambda + i \mu'}$, but $y_{k\ell}^{\lambda}(\Gamma) \neq 0$ for all $\Gamma \in C_{\lambda + i \mu}$. 
\end{proof}

\begin{remark}Immediately, we can verify
$C_{\lambda + i \phi} \cong C_{\lambda}^\R$, $C_{\phi + i \phi} = C_{\phi} = C_{\phi}^\R$, $C_{\lambda + i \lambda}$ is open dense in $C_{\lambda}$, and
$\overline{C_{\lambda + i \mu'}} \cap C_{\lambda} = \bigsqcup_{\mu:\mu \le \mu'} C_{\lambda + i \mu}$.
\end{remark}

\begin{proposition} \label{prop:Jacobianmixed}
If $\lambda \le \lambda'$, $|\lambda'| = |\lambda|+1$, and $\mu \le \lambda$
\[ \varphi_{\lambda}^{\lambda'} ( C_{\lambda + i \mu} \times \R^\times) \subseteq C_{\lambda' + i \mu}.\]

Moreover, suppose $\lambda = \lambda_J$ and $\lambda' = \lambda_{(J')^{\vee}}$, and denote the real Jacobian determinant of the transition map
\[z^{\lambda'} \circ \varphi_{\lambda}^{\lambda'} \circ ((z^{\lambda})^{-1} \times \Id_{\R^\times}) : U_{\lambda + i \mu} \times \R^\times \to U_{\lambda' + i \mu}\]
of $C_{\lambda'+i \mu}$ by $T_{\lambda + i \mu}^{\lambda'+i\mu}$. Then $T_{\lambda + i \mu}^{\lambda' + i \mu}$ is a function of $t$ only, and $T_{\lambda +i \mu}^{\lambda' + i \mu}(t)$ is equal to
\begin{equation} \label{eq:JacobianACRmixed} \begin{cases} (-1)^{|c_\star| + |c_\star \cap \mu|} \cdot t^{-|c_\star| - |c_\star \cap \mu|-1} & \text{ if }  |J'| > |J| \\ (-1)^{|c_\star| + |c_\star \cap \mu| + |\rho_\star| + |\rho_\star \cap \mu|} \cdot t^{-|\alpha_\star| - |\alpha_\star \cap \mu| - |\mu \cap \{(\star, \star)\}| - 2} & \text{ if } |J'| = |J| \end{cases}\end{equation}
where $t$ is the coordinate of $\R^\times$.
\end{proposition}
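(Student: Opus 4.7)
The statement has two parts: the inclusion $\varphi_\lambda^{\lambda'}(C_{\lambda + i\mu} \times \R^\times) \subseteq C_{\lambda'+i\mu}$ and the real Jacobian formula. I would prove them in sequence using the explicit transition maps \eqref{eq:znew} and \eqref{eq:zsame} from Section \ref{subsec:attachmixed}.

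For the inclusion, the plan is a case-by-case check. For each $(k, \ell) \in \lambda'$ one sees from \eqref{eq:znew} and \eqref{eq:zsame} that $z'_{k\ell}$ is a polynomial in $t^{-1}$, some $z_\alpha$'s, and, when $(k, \ell) \in \lambda$, $z_{k\ell}$ itself. The task is to verify that, when $t \in \R^\times$ and the inputs $z_\bullet$ have the type prescribed by $U_{\lambda + i\mu}$, the output has the type prescribed by $U_{\lambda' + i \mu}$. The crucial combinatorial input is the shifted Young diagram property of $\mu$: if $(k, \ell) \notin \mu$ then any box obtainable from $(k, \ell)$ by moving right in a row or down in a column also lies outside $\mu$. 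Applying this to \eqref{eq:leftofcolumn}, for example, shows that for $(k, \ell) \in \lambda' \setminus \mu$ with $\ell < \star$ the factors $z_{k\star}$ and $z_{\ell\star}$ in the correction term are both real, so $z'_{k\ell}$ is real. The other cases follow the same pattern; for $(k, \ell) \in \mu$ the leading term is a nonzero real scalar multiple of $z_{k\ell}$, preserving the nonzero imaginary part while the correction terms land in $\R$ by the same monotonicity.

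For the Jacobian, the key observation is that the complex Jacobian is already upper triangular (Section \ref{subsec:attachmixed}). Ordering the real coordinates of $U_{\lambda+i\mu} \times \R^\times$ and $U_{\lambda'+i\mu}$ lexicographically, with each $(k, \ell) \in \mu$ contributing the pair $(x_{k\ell}, y_{k\ell})$ and each $(k, \ell) \notin \mu$ contributing the single real coordinate $x_{k\ell}$, the real Jacobian inherits block upper triangularity. Its diagonal blocks are of three kinds: a $1 \times 1$ block $\partial z'_{k\ell}/\partial z_{k\ell}$ for $(k, \ell) \in \lambda \setminus \mu$; a $2 \times 2$ realification block of determinant $|\partial z'_{k\ell}/\partial z_{k\ell}|^2$ for $(k, \ell) \in \mu$; and a $1 \times 1$ block $\partial z'_\mathrm{new}/\partial t = -t^{-2}$ pairing $t$ with the new variable. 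The diagonal entries recorded after \eqref{eq:Jacobian} are $1$, $\pm t^{-1}$, or $\pm t^{-2}$, so each box in $\mu$ doubles the exponent of $t$ but cancels any sign, while each box in $\lambda \setminus \mu$ contributes its signed $t$-power directly.

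Finally one must account for the permutation sign arising because $t$ is at the end of the source lex order but $z'_\mathrm{new}$ sits at the lex position of $(\star, \star + j_\star^\vee)$ (respectively $(\star, \star)$) in the target. Moving the one-dimensional $t$-column past $|\rho_\star \setminus \mu|$ one-dimensional real blocks and $|\rho_\star \cap \mu|$ two-dimensional complex blocks contributes sign $(-1)^{|\rho_\star \setminus \mu| + 2|\rho_\star \cap \mu|} = (-1)^{|\rho_\star| + |\rho_\star \cap \mu|}$. Combining the diagonal-block contributions with this permutation sign, and carefully splitting $|\alpha_\star|$ (or $|c_\star|$) into $\mu$ and non-$\mu$ parts — in the $|J'| = |J|$ case treating the corner $(\star, \star)$ separately because its diagonal entry $t^{-2}$ squares to $t^{-4}$ when complexified — yields \eqref{eq:JacobianACRmixed}. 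The main point of care is the combinatorial bookkeeping; I do not anticipate a genuine analytic obstacle beyond it.
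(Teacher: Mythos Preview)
Your proposal is correct and follows essentially the same route as the paper: both arguments examine the explicit transition formulas \eqref{eq:znew}, \eqref{eq:zsame}, use the shifted Young diagram property of $\mu$ to control which correction terms are real, and then read off the real Jacobian as an upper-triangular determinant whose $\mu$-boxes contribute squared (hence sign-free) diagonal factors while $\lambda\setminus\mu$-boxes contribute their signed $t$-powers directly. Your language of $1\times1$ versus $2\times2$ realification blocks is just a more explicit rendering of the paper's ``double count'' phrasing, and your permutation-sign bookkeeping for $t$ versus the new coordinate matches the paper's $(-1)^{|\rho_\star|+|\rho_\star\cap\mu|}$ correction. One small imprecision: when you write ``for $(k,\ell)\in\mu$ \ldots\ the correction terms land in $\R$'', you really mean $(k,\ell)\in\mu\setminus\mathring{\mu}$, since for interior boxes the corrections need not be real (but also need not be, as $z'_{k\ell}$ is unconstrained there).
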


\begin{proof}
We examine equations \eqref{eq:znew} and \eqref{eq:zsame}. When $(k, \ell) \in \mathring{\mu}$, $z'_{k\ell}$ is complex valued. When $(k, \ell) \in \mu \setminus \mathring{\mu}$, $z'_{k\ell}$ has nonvanishing imaginary part because $z_{k\ell}$ has nonvanishing imaginary part, and if either $k' > k$ or $\ell' > \ell$, $z_{k', \ell'}$ is real valued for $\Gamma \in C_{\lambda + i \mu}$. Similarly, when $(k, \ell) \in \lambda \setminus \mu$, $z'_{k\ell}$ has vanishing imaginary part because $z_{k\ell}$ has vanishing imaginary part, and if either $k' > k$ or $\ell' > \ell$, $z_{k', \ell'}$ is real valued.

The ordering of the variables $\{x_{k\ell}\}_{(k, \ell) \in \lambda} \cup\{y_{k\ell}\}_{(k, \ell) \in \mu}$ ensures that the real Jacobian determinant of the transition map is upper triangular. In equations \eqref{eq:undercolumn}, \eqref{eq:topcolumn}, \eqref{eq:leftarch}, \eqref{eq:toparch}, \eqref{eq:rightarch}, \eqref{eq:leftcornerarch}, and \eqref{eq:rightcornerarch}, we double count contributions of $\mp t^{-1}$ (and $\mp t^{-2}$) when $(k, \ell) \in \mu$. We also double count the sign contribution due to rearranging the order of $x_{\star + j_\star^\vee, \star}$ from $|\rho_\star|$ to $|\rho_\star| + |\rho_\star \cap \mu|$.
\end{proof}

\begin{proposition}[\label{prop:frontiermixed}Frontier condition]
The following are equivalent
\begin{enumerate}
    \item $C_{\lambda + i \mu} \cap \overline{C_{\lambda' + i \mu'}} \neq \phi$.
    \item $C_{\lambda + i\mu} \subseteq \overline{C_{\lambda' + i \mu'}}$.
    \item $\lambda \le \lambda'$ and $\mu \le \mu'$.
\end{enumerate}
\end{proposition}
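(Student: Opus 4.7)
The plan is to prove the cycle $(2) \Rightarrow (1) \Rightarrow (3) \Rightarrow (2)$. The first implication is immediate since $C_{\lambda+i\mu}$ is nonempty.

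For $(3) \Rightarrow (2)$, we would mimic the proof of Proposition~\ref{thm:frontier}. Pick a saturated chain $\lambda = \lambda_0 < \lambda_1 < \cdots < \lambda_N = \lambda'$ with $|\lambda_{k+1}| = |\lambda_k| + 1$. For each $k$, Proposition~\ref{prop:Jacobianmixed} with real parameter sends $C_{\lambda_k + i\mu} \times \R^\times$ into $C_{\lambda_{k+1} + i\mu}$ via $\varphi_{\lambda_k}^{\lambda_{k+1}}$; the hypothesis $\mu \le \lambda_k$ follows from $\mu \le \lambda \le \lambda_k$. Iterating along the chain with real parameters tending to zero realizes every point of $C_{\lambda+i\mu}$ as a limit of points in $C_{\lambda'+i\mu}$, so $C_{\lambda+i\mu} \subseteq \overline{C_{\lambda'+i\mu}}$. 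Since $\mu \le \mu'$, Lemma~\ref{thm:frontiercellwisemixed} applied inside $C_{\lambda'}$ then yields $C_{\lambda'+i\mu} \subseteq \overline{C_{\lambda'+i\mu'}}$, and composing gives (2).

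For $(1) \Rightarrow (3)$, the inequality $\lambda \le \lambda'$ is immediate from Proposition~\ref{thm:frontier}. To show $\mu \le \mu'$, pick a sequence $\Gamma_n \in C_{\lambda'+i\mu'}$ converging to $\Gamma \in C_{\lambda+i\mu}$. Along the same chain, the maps $\varphi_{\lambda_k}^{\lambda_{k+1}}$ extend continuously to local homeomorphisms $\bar\varphi_{\lambda_k}^{\lambda_{k+1}}$ by sending the $t = 0$ fiber to $C_{\lambda_k}$. Iteratively pulling $\Gamma_n$ back should yield, for large enough $n$, sequences $\xi_{k, n} \in C_{\lambda_k}$ with $\xi_{N, n} = \Gamma_n$, $\xi_{k, n} = \bar\varphi_{\lambda_{k-1}}^{\lambda_k}(\xi_{k-1, n}, t_{k, n})$, $t_{k, n} \to 0$, and $\xi_{0, n} \to \Gamma$. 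Passing to subsequences using finiteness of the indexing set, we may assume $\xi_{k, n} \in C_{\lambda_k + i\nu_k}$ for a fixed $\nu_k$, with $\nu_N = \mu'$. Lemma~\ref{thm:frontiercellwisemixed} at level $\lambda_0 = \lambda$ gives $\mu \le \nu_0$, so it remains to establish the single-step inequality $\nu_{k-1} \le \nu_k$ for each $k$, which chains to $\mu \le \mu'$.

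The hard part will be this single-step claim: for any $\zeta \in C_{\lambda''+i\nu}$ and $t \in \C^\times$, the mixed type $\tilde\mu$ of $\varphi_{\lambda''}^{\lambda'''}(\zeta, t) \in C_{\lambda'''}$ satisfies $\nu \le \tilde\mu$. Using that shifted Young diagrams have strictly decreasing $j^\vee$-sequences, this should reduce to showing every removable box $(k_0, \ell_0) \in \nu \setminus \mathring{\nu}$ is dominated, in both row and column index, by some box of the non-real locus of $\varphi(\zeta, t)$ in the $z^{\lambda'''}$-chart. We would handle this by case analysis of the transition equations \eqref{eq:znew} and \eqref{eq:zsame}, based on the position of $(k_0, \ell_0)$ relative to $\star$ (and, in the $|J''|=|J'''|$ case, relative to the arch $\alpha_\star$, column $c_\star$, and roof $\rho_\star$ of Remark~\ref{rmk:ACR}). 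The delicate subcase is when $\Im z^{\lambda'''}_{k_0\ell_0}$ vanishes due to cancellation between $\Im z^{\lambda''}_{k_0\ell_0}$ and the $t^{-1}$-correction terms; the key observation should be that such cancellation forces $t$ to be non-real, which then forces the newly added box in $\lambda''' \setminus \lambda''$ itself to be non-real and serve as the required dominating box.
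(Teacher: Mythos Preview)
Your implications $(2)\Rightarrow(1)$ and $(3)\Rightarrow(2)$ coincide with the paper's argument essentially verbatim.

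For $(1)\Rightarrow(3)$ you take a genuinely different and longer route than the paper. The paper does not allow the intermediate mixed types to vary, and it never needs your single-step monotonicity claim. Instead, it invokes Proposition~\ref{prop:Jacobianmixed} together with the nonvanishing Jacobian of equation~\eqref{eq:JacobianACRmixed} to conclude that each $\varphi_{\lambda_k}^{\lambda_{k+1}}$, restricted to \emph{real} parameter, is already a local homeomorphism $C_{\lambda_k+i\mu'}\times\R^\times\to C_{\lambda_{k+1}+i\mu'}$. Composing these along the chain gives a local homeomorphism $C_{\lambda+i\mu'}\times(\R^\times)^{|\lambda'|-|\lambda|}\to C_{\lambda'+i\mu'}$, and pulling the sequence $\Gamma'_m$ back through its local inverse lands directly in $C_{\lambda+i\mu'}$. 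One then has $\Gamma_m\to\Gamma$ with $\Gamma_m\in C_{\lambda+i\mu'}$, and a single invocation of the Lemma~\ref{lem:specialsquare} argument (exactly as in Lemma~\ref{thm:frontiercellwisemixed}) gives $\mu\le\mu'$. This bypasses entirely the case analysis on equations~\eqref{eq:znew} and~\eqref{eq:zsame} that your monotonicity claim would require, and in particular the delicate cancellation subcase you flag. What your approach buys is that it does not presuppose the pull-back parameter is real---which matters precisely when the box $\lambda_{k+1}\setminus\lambda_k$ being removed lies in $\mu'$, since then $z'_{\text{new box}}=t^{-1}$ need not be real---at the cost of leaving the core monotonicity statement only sketched rather than proved.
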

\begin{proof}
$(2) \implies (1)$ is immediate as $C_{\lambda + i \mu}$ is nonempty. $(3) 
\implies (2)$: There exists an increasing sequence $\lambda_1 \le \lambda_2 \le \cdots \le \lambda_\ell$ such that $\lambda_1 = \lambda$, $\lambda_\ell = \lambda'$, and $|\lambda_{k+1}| = |\lambda_k|+1$ for $1 \le k < \ell = |\lambda'| - |\lambda|$, and a sequence of embeddings $\{\varphi_{\lambda_k}^{\lambda_{k+1}}\}_{k = 1}^\ell$. For each $\Gamma \in C_{\lambda + i \mu}$, let
\[ \Gamma_{t_1, \cdots, t_\ell} := \varphi_{\lambda_{\ell - 1}}^{\lambda_\ell}(\cdots(\varphi_{\lambda_2}^{\lambda_3} (\varphi_{\lambda_1}^{\lambda_2} (\Gamma, t_1), t_2), \cdots), t_\ell) \quad t_1, \cdots,  t_\ell \in \R\]
where if $t_j = 0$ for some $j \in \{1, \cdots, \ell\}$, then $t_k = 0$ for all $k \ge j$. Then by equations \ref{eq:newLagrangian}, \ref{eq:sameLagrangian} and Proposition \ref{prop:Jacobianmixed}, $\Gamma_{t_1, \cdots, t_\ell} \in C_{\lambda' + i \mu} \subseteq \overline{C_{\lambda' + i \mu'}}$, and
\[ \lim_{t_1, \cdots, t_\ell \to 0} \Gamma_{t_1, \cdots, t_\ell} = \Gamma.\]
$(1) \implies (3)$: If $\lambda \nleq \lambda'$, then $C_\lambda \cap \overline{C_{\lambda'}} = \phi$, which contradicts the assumption. So $\lambda \le \lambda'$. By assumption there exists a sequence $\Gamma'_m \to \Gamma$ with $\Gamma'_m \in C_{\lambda' + i\mu'}$. By the computation of Jacobian determinants of equation \eqref{eq:Jacobian},  $\varphi_{\lambda_k}^{\lambda_{k+1}}$ are local homeomorphisms from $C_{\lambda_k + i \mu'} \times \R^\times$ to $C_{\lambda_{k+1} + i \mu'}$. By induction, we get a local homeomorphism $C_{\lambda + i \mu'} \times (\R^\times)^{|\lambda'| - |\lambda|}$ to $C_{\lambda' + i \mu'}$. Let $(\Gamma_m, (t^m_1, \cdots, t^m_\ell))$ be the image $\Gamma_m'$ under these local homeomorphisms, which exist for $m$ sufficiently large. Then if $\mu \nleq \mu'$, there exists $(a, b) \in \mu \setminus \mathring{\mu}$ not in $\mu'$. Then $y_{ab}^\lambda(\Gamma_m) = 0$ but $y_{ab}^\lambda(\Gamma) \neq 0$, which is a contradiction. So $\mu \le \mu'$.
\end{proof}

We obtain attaching maps of Schubert cells of mixed type by restricting the attaching maps of Proposition \ref{prop:attachingmaps}.

\begin{theorem}[\label{thm:degreesmixed}Attaching maps of Schubert cells of mixed type]~
If $|\lambda| + |\mu| + 1 = |\lambda'| + |\mu'| + 1$, let
\[ g_{\lambda + i \mu, \varepsilon}^{\lambda' + i \mu', \varepsilon'} : \partial C_{\lambda' +i \mu', \varepsilon'} \to \overline{C_{\lambda + i \mu, \varepsilon}}.\]
\begin{enumerate}

\item If $\lambda \nleq \lambda'$ or $\mu \nleq \mu'$, then
\begin{equation} \label{eq:bdy1}
\deg g_{\lambda +i \mu, \varepsilon}^{\lambda' + i \mu', \varepsilon'} = 0.
\end{equation}

\item If $\mu = \mu'$ and $\lambda \le \lambda'$ let
\[\varepsilon^{\pm}(k, \ell) :=\begin{cases} \pm \varepsilon(k, \ell) & \text{ if } (k, \ell) \in ((\mu \setminus \mathring{\mu}) \cap (\alpha_\star \setminus c_\star))\setminus \{ (\star, \star)\}\\
\mp \varepsilon(k, \ell) & \text{ if } (k, \ell) \in (\mu \setminus \mathring{\mu}) \cap c_\star\\
\varepsilon(k, \ell) & \text{ otherwise,} \end{cases}\]
where $\alpha_\star$ is empty if $\lambda'$ has more rows than $\lambda$.
Moreover, let
\[\delta^{\pm}(\varepsilon, \varepsilon') := \begin{cases} 1 & \text{ if } \varepsilon' = \varepsilon^{\pm}\\
0 & \text{ otherwise.} \end{cases}
\]
Then
\begin{equation}
\label{eq:bdy2}
\deg g_{\lambda +i \mu, \varepsilon}^{\lambda' + i \mu, \varepsilon' } = T_{\lambda +i \mu}^{\lambda' + i \mu} (+1) \cdot \delta^+(\varepsilon, \varepsilon') - T_{\lambda +i \mu}^{\lambda' + i \mu} (-1)\cdot \delta^-(\varepsilon, \varepsilon'),
\end{equation}
where $T_{\lambda + i \mu}^{\lambda' + i\mu}$ is as in equation \eqref{eq:JacobianACRmixed}.

\item If $\lambda = \lambda'$ and $\mu \le \mu'$ then
\begin{equation} \label{eq:bdy3}
\deg g_{\lambda + i \mu, \varepsilon}^{\lambda' + i \mu', \varepsilon'} = \begin{cases} - (-1)^{|\lambda \setminus \mu'|}\cdot \varepsilon'(\star_\mu) & \text{ if } \varepsilon' = \varepsilon \text{ on } (\mu' \setminus \mathring{\mu}') \cap (\mu \setminus \mathring{\mu})\\0 & \text{ otherwise.} \end{cases} \end{equation}
where $\star_\mu$ is the unique element of $\mu' \setminus \mu$.

\end{enumerate}
\end{theorem}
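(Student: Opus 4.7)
The plan is to treat the three cases separately, as they describe geometrically very different phenomena and can be reduced to different earlier results.

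Part (1) is immediate from Proposition \ref{prop:frontiermixed}: if $\lambda \nleq \lambda'$ or $\mu \nleq \mu'$, then $C_{\lambda + i \mu} \cap \overline{C_{\lambda' + i \mu'}} = \phi$, so the image of the characteristic map of $C_{\lambda' + i \mu', \varepsilon'}$ is disjoint from $\overline{C_{\lambda + i \mu, \varepsilon}}$ and equation \eqref{eq:bdy1} holds.

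For Part (2) (where $\mu = \mu'$, $\lambda \le \lambda'$, $|\lambda'|=|\lambda|+1$), the approach is to leverage the local homeomorphism $\varphi_\lambda^{\lambda'} \colon C_{\lambda + i\mu} \times \R^\times \to C_{\lambda' + i\mu}$ supplied by Proposition \ref{prop:Jacobianmixed}. First I will determine which connected component of $C_{\lambda' + i\mu}$ each $C_{\lambda + i\mu, \varepsilon} \times \R^\pm$ lands in. For each corner $(k,\ell) \in \mu \setminus \mathring{\mu}$, the coefficient of $y_{k\ell}$ in $y'_{k\ell}$ can be read directly from equations \eqref{eq:znew} and \eqref{eq:zsame}: it equals $1$ outside the arch/column region, equals $\pm t^{-1}$ on the $\alpha_\star \setminus c_\star$ and $c_\star$ regions respectively, and equals $t^{-2}$ at the corner $(\star,\star)$ (which is positive for both $t = +1$ and $t = -1$, accounting for the exclusion of $(\star,\star)$ in the definition of $\varepsilon^\pm$). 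Specializing to $t = \pm 1$ recovers the definition of $\varepsilon^\pm$ in the statement, so $\varphi_\lambda^{\lambda'}(C_{\lambda + i\mu,\varepsilon} \times \R^\pm) \subseteq C_{\lambda' + i\mu, \varepsilon^\pm}$. The degree at a regular value $p \in C_{\lambda + i\mu, \varepsilon}$ is then a signed count of preimages on the boundary sphere of the characteristic map: the $t \to 0^+$ branch contributes $T_{\lambda+i\mu}^{\lambda'+i\mu}(+1) \cdot \delta^+(\varepsilon,\varepsilon')$, and the $t \to 0^-$ branch contributes $-T_{\lambda+i\mu}^{\lambda'+i\mu}(-1) \cdot \delta^-(\varepsilon,\varepsilon')$, with the minus sign arising from the opposite induced boundary orientation on $\{0\}$ as $\partial \R^-$ versus $\partial \R^+$.

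For Part (3) (where $\lambda = \lambda'$, $\mu \le \mu'$, $|\mu'|=|\mu|+1$), both cells already live inside a single $C_\lambda$, so no row reduction is involved. Writing $\star_\mu = (k_0,\ell_0)$ for the unique square in $\mu' \setminus \mu$, the attaching map arises by letting $y_{k_0\ell_0}$ degenerate to $0$. By Lemma \ref{thm:frontiercellwisemixed}, the resulting boundary face lies in $\overline{C_{\lambda + i\mu, \varepsilon}}$ precisely when $\varepsilon$ and $\varepsilon'$ agree on the overlap $(\mu'\setminus\mathring{\mu}') \cap (\mu \setminus \mathring{\mu})$, and otherwise the degree is zero. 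When agreement holds, the attaching map restricted to this face is a homeomorphism onto $C_{\lambda + i\mu, \varepsilon}$, so the degree is a sign, determined by three contributions: the direction $\varepsilon'(\star_\mu)$ of the half-line $\varepsilon'(\star_\mu) \cdot \R^+$, the parity $(-1)^{|\lambda \setminus \mu'|}$ of the permutation moving $y_{k_0\ell_0}$ past the real-valued coordinates in $\lambda\setminus\mu'$ that follow it in the lexicographic ordering, and an overall minus from the boundary-orientation convention.

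The main obstacle will be the sign bookkeeping in Part (2). Although the coefficient analysis producing $\varepsilon^\pm$ is straightforward from equations \eqref{eq:znew}--\eqref{eq:zsame}, correctly matching the two contributions to the real Jacobian values $T_{\lambda + i\mu}^{\lambda' + i\mu}(\pm 1)$ requires careful tracking of the boundary sphere orientation of the characteristic map and of the sign conventions already baked into $T$ in equation \eqref{eq:JacobianACRmixed}; the consistency check is that when $\mu = \phi$ this reduces to the real formula \eqref{eq:bdyreal}.
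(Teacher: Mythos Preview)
Your proposal is correct and follows essentially the same route as the paper's proof: Part~(1) via the frontier condition (Proposition~\ref{prop:frontiermixed}), Part~(2) by reading off the sign of the $y_{k\ell}$-coefficient in equations~\eqref{eq:znew}--\eqref{eq:zsame} to identify $\varepsilon^\pm$ and then invoking the Jacobian $T_{\lambda+i\mu}^{\lambda'+i\mu}(\pm 1)$ from Proposition~\ref{prop:Jacobianmixed}, and Part~(3) by collapsing $y_{\star_\mu}\to 0$ inside $C_\lambda$ and tracking the orientation sign. One small imprecision: in Part~(3) the boundary face $\{y_{\star_\mu}=0\}$ need not map homeomorphically onto a single $C_{\lambda+i\mu,\varepsilon}$ when $\mu\setminus\mathring{\mu}$ contains a corner newly exposed by removing $\star_\mu$ (i.e.\ one lying in $\mathring{\mu}'$); rather it surjects onto the union of all $\varepsilon$ compatible with $\varepsilon'$ on the overlap, but the local-degree computation you outline is unaffected.
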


\begin{proof}
If $\lambda \nleq \lambda'$ or $\mu \nleq \mu'$, then the degree of $g_{\lambda + i \mu, \varepsilon}^{\lambda' + i \mu', \varepsilon'}$ is zero by the frontier condition.

If $\mu = \mu'$ and $\lambda \le \lambda'$, we look at Equations \eqref{eq:znew} and \eqref{eq:zsame}. When $(k, \ell) \in \mathring{\mu}$, both $z_{k\ell}$ and $z_{k\ell}'$ are complex valued with no restrictions, and when $(k, \ell) \in \lambda \setminus \mu$, both $z_{k\ell}$ and $z_{k\ell}'$ are real valued with no restrictions. When $(k, \ell) \in \mu \setminus \mathring{\mu}$, the coefficient of $z_{k\ell}$ is $t^{-1}$ if $(k, \ell) \in (\alpha_\star \setminus c_\star)\setminus \{(\star, \star)\}$ and $-t^{-1}$ if $(k, \ell) \in c_\star$ ($(\lambda' \setminus \lambda)\cap (\mu \setminus \mathring{\mu})$ is empty), and either $1$ or $t^{-2}$ otherwise. So when $t>0$, $\varphi_{\lambda}^{\lambda'}(C_{\lambda +i \mu, \varepsilon)} \times \R^+) \subseteq C_{\lambda' + i \mu, \varepsilon'}$ if and only if $\varepsilon' = \varepsilon^+$, and if $t < 0$, $\varphi_{\lambda}^{\lambda'}(C_{\lambda +i \mu, \varepsilon} \times \R^-) \subseteq C_{\lambda' + i \mu, \varepsilon'}$ if and only if $\varepsilon' = \varepsilon^-$.

If $\lambda = \lambda'$ and $\mu \le \mu'$, then $-\varepsilon(\star_\mu)$ points in the gradient direction of $y_{\star_\mu}$, and the sign correction due to the ordering of $y_{k\ell}$ is $(-1)^{|\lambda \setminus \mu'|}$.

\end{proof}
\begin{example}
    Suppose $\lambda = \lambda_{\{4, 2\}}$, $\lambda' = \lambda_{\{ 4, 3\}}$, and $\mu = \lambda_{\{ 4, 2\}}$. Then
    \[ \lambda + i \mu = \ytableaushort{ \none \, \times \none, \, \, \, \times} \text{ and } \lambda' + i \mu = \ytableaushort{ \none \, \times {*(lightgray)\circ}, \, \, \, \times}.\]
Then $\mu \setminus \mathring{\mu} = \{ (1, 4), (2, 3)\}$,
\[ ((\mu \setminus \mathring{\mu}) \cap (\alpha_\star \setminus c_\star))\setminus \{(\star, \star)\} = \{ (2, 3)\}\]
and
\[ (\mu \setminus \mathring{\mu}) \cap c_\star = \{ (1, 4)\}.\]
Denote $\varepsilon$ by an ordered pair $(\varepsilon(1, 4), \varepsilon(2, 3))$. Since there are no restrictions on the values of $z_{11}'$, $z_{12}'$, $z_{13}'$, $z_{22}'$, and 
\begin{eqnarray*}
    z_{14}' &=& -t^{-1} z_{14}\\
    z_{23}' &=& t^{-1} z_{23},
\end{eqnarray*}
so 
\begin{eqnarray*}
\varphi_{\lambda}^{\lambda'} (C_{\lambda + i \mu, (\pm 1, +1)} \times \R^+) &\subseteq& C_{\lambda' + i \mu, (\mp 1, +1)}\\
\varphi_{\lambda}^{\lambda'}(C_{\lambda + i \mu, (\pm 1, -1)} \times \R^+) &\subseteq& C_{\lambda' + i \mu, (\mp 1, -1)}
\end{eqnarray*}
and attach with degree $T_{\lambda + i \mu}^{\lambda' + i \mu} (+1) = - (+1)^{-9} = -1$, and
\begin{eqnarray*}
\varphi_{\lambda}^{\lambda'}(C_{\lambda + i \mu, (+1, \pm 1)} \times \R^-) &\subseteq&C_{\lambda' + i \mu, (+ 1, \mp 1)}\\
\varphi_{\lambda}^{\lambda'} (C_{\lambda + i \mu, (- 1, \pm 1)} \times \R^-) &\subseteq& C_{\lambda' + i \mu, (-1, \mp 1)}
\end{eqnarray*}
and attach with degree $-T_{\lambda + i \mu}^{\lambda' + i \mu} (-1) = - (-1)^{-9} = +1$. The attaching maps are shown in Figure \ref{fig:bdy2}.

\begin{figure}
\centering \includegraphics{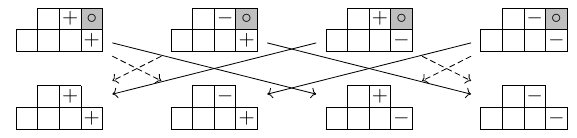}
\caption{A diagrammatic description of the attaching maps $\partial C_{\lambda_{\{4, 3\}} + i\lambda_{\{4, 2\}}} \to C_{ \lambda_{\{4, 2\}} + i\lambda_{\{4, 2\}}}$. Arrows represent attaching maps of degree $+1$ and dashed arrows represent attaching maps of degree $-1$.}
\label{fig:bdy2}
\end{figure}

\end{example}

\begin{example}
Suppose $\lambda = \lambda' = \lambda_{\{4, 3\}}$, $\mu' = \lambda_{\{ 4, 2\}}$, $\mu = \lambda_{\{ 4, 1\}}$, so that
\[ \lambda + i \mu' = \ytableaushort{ \none \, {*(lightgray)\times} \circ, \, \, \, \times}\text{ and } \lambda + i\mu = \ytableaushort{\none \times {*(lightgray) \circ} \circ, \, \, \, \times}. \]
The attaching maps are shown in Figure \ref{fig:bdy3}.

\begin{figure}
\centering \includegraphics{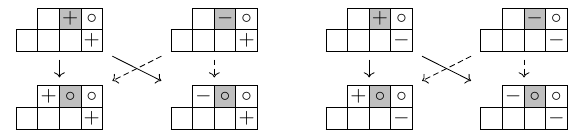}
\caption{A diagrammatic description of the attaching maps $\partial C_{\lambda_{\{4, 3\}} + i\lambda_{\{4, 2\}}} \to C_{ \lambda_{\{4, 3\}} + i\lambda_{\{4, 1\}}}$. Arrows represent attaching maps of degree $+1$ and dashed arrows represent attaching maps of degree $-1$.}
\label{fig:bdy3}
\end{figure}

\end{example}

\section{Applications}
\label{sec:applications}

If $n, m$ are positive integers, let $\iota_{n, m}: \C^{2n} \hookrightarrow \C^{2(n+m)}$ be an embedding given by the linear extension of $\ee_{i} \mapsto \ee_{i + m}$, $\ff_{i} \mapsto \ff_{i + m}$. $\iota_{n, m}$ induces an embedding of complex Lagrangian Grassmannians $I_{n, m}: \Lag^\C(\R^{2n}) \hookrightarrow \Lag^\C(\R^{2(n+m)})$ as
\[ I_{n, m}(\Gamma) := \span_\C\{\ee_1, \cdots, \ee_m\} \oplus \iota_{n, m}(\Gamma).\]

If $M_\Gamma$ is the $n \times 2n$ matrix representing the reduced row echelon form of $\Gamma \in \Lag^\C(\R^{2n})$, then the $(n+m) \times 2(n+m)$ matrix representing the reduced row echelon form of $I_{n, m}(\Gamma)$ is 
\[ \begin{bmatrix} \operatorname{Id}_{m, m} & 0_{m, 2n} & 0_{m, m}\\
0_{n, m} & M_{\Gamma} & 0_{m, m}\end{bmatrix}.\]
$I_{n ,m}$ maps each Schubert cell of mixed type $C_{\lambda  + i \mu, \varepsilon}$ of $\Lag^\C(\R^{2n})$ to the Schubert cell of mixed type $C_{\lambda + i \mu, \varepsilon}$ of $\Lag^\C(\R^{2(n+m)})$. $I_{n, m}$ is continuous, so it preserves all incidence relations. Moreover, $\varphi_\lambda^{\lambda'} \circ I_{n, m} = I_{n, m} \circ \varphi_\lambda^{\lambda'}$, so it preserves all degrees of attaching maps as well. So the $I_{m, n}(\Lag^{\C}(\R^{2n}))$ can be realized as a subcomplex of $\Lag^\C(\R^{2(n+m)})$.

Similarly, let $\iota_{m, n}^\R: \R^{2m} \hookrightarrow \R^{2n}$ and $I_{n, m}^{\R}: \Lag^\R(\R^{2m}) \hookrightarrow \Lag^\R(\R^{2n})$ be the corresponding maps for real coefficients. Suppressing these identifications, we will regard $\Lag^\R(\R^{2n})$ as subcomplexes of $\Lag^\C(\R^{2(n+m)})$. 

\begin{corollary}[Homotopy extension property]\label{cor:HEP}
$(\Lag^\C(\R^{2(n+m)}), \Lag^\R(\R^{2n}))$ satisfies the homotopy extension property.
\end{corollary}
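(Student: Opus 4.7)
The plan is to reduce this corollary to the classical theorem that every CW pair $(X, A)$, where $A$ is a subcomplex of a CW complex $X$, satisfies the homotopy extension property. This is proved as Proposition 0.16 of Hatcher \cite{Hatcher}, and in the stronger sense of Whitehead's definition used in this paper it is essentially built into the treatment of Sections 4--5 of \cite{Whitehead}. What therefore needs to be verified is only that $\Lag^\R(\R^{2n})$ is realized as a subcomplex of $\Lag^\C(\R^{2(n+m)})$ in the CW structure produced by Theorem \ref{thm:cellcomplex}.

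First I would assemble the identifications in two steps. Step one: by Theorem \ref{thm:stratification}(3), the complexification map $\cdot \otimes_\R \C$ sends each real Schubert cell $C_\lambda^\R$ of $\Lag^\R(\R^{2n})$ onto the cell $C_{\lambda + i \phi}$ of the mixed-type decomposition of $\Lag^\C(\R^{2n})$. These cells have empty $\mu$, so $\mathcal{E}(\mu)$ is a singleton, and each $C_{\lambda + i \phi}$ is itself a cell of the CW structure of Theorem \ref{thm:cellcomplex}. Thus the embedded image of $\Lag^\R(\R^{2n})$ is the union of the cells $\{C_{\lambda + i \phi}\}_\lambda$, and this union is closed under the frontier condition of Proposition \ref{prop:frontiermixed} (since $\mu = \phi \le \mu'$ forces $\mu' = \phi$ too once we restrict to cells with $\mu' = \phi$). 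Hence it is a subcomplex of $\Lag^\C(\R^{2n})$.

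Step two: the embedding $I_{n, m}: \Lag^\C(\R^{2n}) \hookrightarrow \Lag^\C(\R^{2(n+m)})$ carries each $C_{\lambda + i \mu, \varepsilon}$ to the similarly labelled cell of the larger Grassmannian, and intertwines the maps $\varphi_\lambda^{\lambda'}$ used to build the attaching maps (this is precisely what is recorded in the paragraph preceding the corollary). Consequently $I_{n, m}$ is a cellular embedding whose image is a subcomplex. Composing, $I_{n, m} \circ (\cdot \otimes_\R \C)$ realizes $\Lag^\R(\R^{2n})$ as a subcomplex of $\Lag^\C(\R^{2(n+m)})$, at which point the homotopy extension property is immediate from the general CW result.

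The only potential obstacle is a pedantic one: the version of HEP for CW pairs is sometimes stated under mildly different conventions for what constitutes a cell. But since Proposition \ref{prop:attachingmaps} produces attaching maps on honest boundaries of balls in the sense of Whitehead's definition, no such issue arises; the real content of the corollary is concentrated in the constructions of Sections \ref{sec:schubert} and \ref{sec:mixed}, and once those are in hand this corollary is a short formal consequence.
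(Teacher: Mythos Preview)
Your proposal is correct and matches the paper's approach: the paper states Corollary~\ref{cor:HEP} without proof, relying on the paragraph immediately preceding it (which you have accurately summarized) to exhibit $\Lag^\R(\R^{2n})$ as a subcomplex of $\Lag^\C(\R^{2(n+m)})$, after which the homotopy extension property is the standard fact about CW pairs.
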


\begin{corollary} \label{cor:nullhomotopic}
If $0 < k \le n$ and $k$ is even then $\overline{C}_{\lambda_{\{k\}}}^\R$ defines a nontrivial torsion class in $H_k(\Lag^\R(\R^{2n}); \Z)$, and is homotopic to a $2k$-sphere inside $\Lag^\C(\R^{2n})$. 
\end{corollary}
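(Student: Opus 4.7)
The first assertion follows from a direct computation in the cellular chain complex of the real Schubert CW structure. Since $\lambda_{\{k\}}$ is a single-row diagram of length $k$, the only shifted Young diagram of size $k-1$ contained in it is $\lambda_{\{k-1\}}$, and equation \eqref{eq:bdyrealsame} (with $|J|=|J'|=1$, $\star=1$, $(j_\star')^\vee=k$) gives degree $-(1-(-1)^k)=0$ when $k$ is even, so $[C_{\lambda_{\{k\}}}^\R]$ is a cellular cycle. The only $(k+1)$-cells whose boundary involves $[C_{\lambda_{\{k\}}}^\R]$ are $[C_{\lambda_{\{k+1\}}}^\R]$ and $[C_{\lambda_{\{k,1\}}}^\R]$, since $\lambda_{\{k\}}$ must fit inside their diagrams: \eqref{eq:bdyrealsame} (now with $(j_\star')^\vee=k+1$ odd) gives the first coefficient $-2$, while \eqref{eq:bdyrealnew} with $\star=2$ (adding a new second row of length one) gives the second coefficient $+2$. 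Every integer combination of these therefore produces an even multiple of $[C_{\lambda_{\{k\}}}^\R]$, so $[C_{\lambda_{\{k\}}}^\R]$ is not itself a boundary while $2[C_{\lambda_{\{k\}}}^\R]=-\partial[C_{\lambda_{\{k+1\}}}^\R]$ is; this produces a nontrivial $2$-torsion class.

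For the second assertion, the strategy is to factor the inclusion $\overline{C}_{\lambda_{\{k\}}}^\R\hookrightarrow\Lag^\C(\R^{2n})$ up to homotopy through a $2k$-sphere inside $\Lag^\C(\R^{2n})$; since $\pi_k(S^{2k})=0$ for $0<k<2k$, any such factorization is automatically null-homotopic. The natural $2k$-sphere comes from Proposition \ref{prop:attachingmaps}: its characteristic map $\Phi:E(\lambda_{\{k\}})\to\overline{C}_{\lambda_{\{k\}}}$ realizes the complex Schubert closure as a closed $2k$-ball, a homeomorphism on the interior onto $C_{\lambda_{\{k\}}}\cong\C^k$, with the boundary $S^{2k-1}$ collapsed into the lower-dimensional skeleton $\overline{C}_{\lambda_{\{k-1\}}}$. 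Under $\Phi$ the real cycle $\overline{C}_{\lambda_{\{k\}}}^\R$ is precisely the image of the real sub-ball $D^k\subset D^{2k}$ obtained by restricting the complex coordinates of $\C^k$ to real ones. The quotient $\overline{C}_{\lambda_{\{k\}}}/\overline{C}_{\lambda_{\{k-1\}}}\cong D^{2k}/\partial D^{2k}\cong S^{2k}$ then sends the real cycle to $D^k/\partial D^k\cong S^k$, which is null-homotopic in $S^{2k}$.

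The principal difficulty, and the expected main obstacle, is upgrading this quotient picture to a genuine $2k$-sphere embedded in $\Lag^\C(\R^{2n})$. I would proceed inductively in $k$, using Corollary \ref{cor:HEP} and the compatible enlargements $\Lag^\C(\R^{2n})\hookrightarrow\Lag^\C(\R^{2(n+m)})$ to null-homotope the $(2k-1)$-skeleton $\overline{C}_{\lambda_{\{k-1\}}}$ inside a sufficiently large ambient complex Lagrangian Grassmannian. Once this auxiliary contraction is in place, the characteristic map $\Phi$ extends to a continuous $\Psi:S^{2k}\to\Lag^\C(\R^{2(n+m)})$ by collapsing the boundary to a point, and the real cycle factors up to homotopy through this embedded sphere. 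The mixed-type $(k+1)$-cells $C_{\lambda_{\{k\}}+i\lambda_{\{1\}},\pm}$ from Theorem \ref{thm:cellcomplex}, whose boundary degrees onto $[C_{\lambda_{\{k\}}}^\R]$ are $\pm 1$ via equation \eqref{eq:bdy3}, supply the explicit $(k+1)$-chains realizing the null-homotopy and making the induction constructive.
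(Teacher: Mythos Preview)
Your first paragraph (the torsion computation) is correct and in fact more explicit than the paper, which only cites \eqref{eq:bdyreal} for the cycle condition and does not spell out non-triviality. One small edge case: when $k=n$ the cell $C_{\lambda_{\{k+1\}}}^\R$ does not exist, so your identity $2[C_{\lambda_{\{k\}}}^\R]=-\partial[C_{\lambda_{\{k+1\}}}^\R]$ is unavailable and torsion must be read off from $C_{\lambda_{\{k,1\}}}^\R$ alone.

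The second half has a genuine gap. Your plan hinges on null-homotoping the \emph{complex} Schubert closure $\overline{C}_{\lambda_{\{k-1\}}}$ inside some large $\Lag^\C(\R^{2(n+m)})$, but this is impossible: $[\overline{C}_{\lambda_{\{k-1\}}}]$ is a free generator of $H_{2(k-1)}(\Lag^\C(\R^{2n});\Z)$, and the stabilisation maps $I_{n,m}$ carry Schubert cells to Schubert cells of the same name, so this class stays nonzero after enlarging $n$. The quotient $\overline{C}_{\lambda_{\{k\}}}/\overline{C}_{\lambda_{\{k-1\}}}\cong S^{2k}$ therefore never materialises as an actual subspace of any $\Lag^\C$, and the factorisation through it cannot be realised geometrically.

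The paper avoids this by staying entirely inside the $(k+1)$-dimensional \emph{mixed} subcomplex $\overline{C}_{\lambda_{\{k\}}+i\lambda_{\{1\}},+}$. By induction $\overline{C}_{\lambda_{\{k-1\}}+i\lambda_{\{1\}},+}$ is contractible (note: the mixed closure, not the complex one), and collapsing it exhibits $\overline{C}_{\lambda_{\{k\}}+i\lambda_{\{1\}},+}$ as a closed $(k+1)$-ball with $\overline{C}_{\lambda_{\{k\}}}^\R$ as its boundary $k$-sphere. (The ``$2k$'' in the statement and ``$2k+1$'' in the proof are evidently slips for $k$ and $k+1$; compare the $k=2$ Remark immediately following, which concludes with a $2$-sphere, not a $4$-sphere.) You actually name the right cell $C_{\lambda_{\{k\}}+i\lambda_{\{1\}},+}$ in your last sentence, but only as a chain witnessing null-homology; the point is to use its \emph{closure} as the topological ball inside which the homotopy takes place.
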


\begin{proof}
By equation \eqref{eq:bdyreal}, $\overline{C}_{\lambda_{\{k\}} + i \phi}$ defines a homology class in $H_k(\Lag^\R(\R^{2n}); \Z)$.
By induction on $k$, $\overline{C}_{\lambda_{\{k-1\}} + i \lambda_{\{ 1\}}, +}$ is contractible. Taking the quotient of this subcomplex inside $\overline{C}_{\lambda_{\{k\}} + i \lambda_{\{ 1\}}, +}$ we get a homotopy equivalence between $\overline{C}_{\lambda_{\{k\}} + i \lambda_{\{ 1\}}, +}$ and the closed $2k+1$-ball. This equivalence identifies the subcomplex $\overline{C}_{\lambda_{\{k\}} + i \phi}$ with its boundary.
\end{proof}

\begin{remark}[$k = 2$ case]
If $k = 2 \le n$, $\overline{C}_{\lambda_{\{2\}}}^\R$ represents the generator of $H_2(\Lag^\R(\R^{2n}); \Z) \cong \Z_2$. Topologically, this set is homeomorphic to a pinched torus. One way to see this is by doing row reductions to
\[ \left[\begin{array}{cc|cc}  \cos \theta & \sin \theta & 0 & 0 \\
-\cos \psi \sin \theta & \cos \psi \cos \theta & -\sin \psi \cos \theta & \sin \psi \sin \theta \end{array}\right] \in E(\lambda_{\{2\}})\]
at various values of $\theta, \psi \in [0, \pi]$ (cf Liu \cite{LeiLiu}). So the generator of $H_2(\Lag^\R(\R^{2n}); \Z)$ is spherical, which is something we cannot conclude from the Hurewicz theorem. By Corollary \ref{cor:nullhomotopic} this pinched torus is homotopic to a $2$-sphere inside $\Lag^\C(\R^{2n})$.
\end{remark}


\printbibliography


{\ifdebug {\newpage} \else {\end{document}} \fi}

\medskip\noindent\textbf{To do:}
\begin{itemize}
\item blah blah
\item blah blah
\end{itemize}

\medskip\noindent\textbf{To discuss:}
\begin{itemize}
\item Cell complex / cell decomposition / cell structure
\item CW complex / CW decomposition / CW structure
\item Cellular complex / cellular decomposition / cellular structure

\item Lei Liu Borel FuksMA IkedaNaruse
\end{itemize}

\medskip\noindent\textbf{Conventions:}
\begin{itemize}
\item blah blah
\item blah blah
\end{itemize}

\medskip\noindent\textbf{Not for action:}
\begin{itemize}
\item blah blah
\item blah blah
\end{itemize}

\medskip\noindent\textbf{Journals to submit to}
\begin{itemize}
    \item Arnold Mathematical Journal (Dmitry Fuchs)
    \item Journal of symplectic geometry (
    \item Advances in Geometry (Kaoru Ono)
\end{itemize}

\end{document}